\newcommand{\Tan}{{\mathrm{Tan}}\,}
\newcommand{\Nor}{{\mathrm{Nor}}\,}
\newcommand{\vare}{\varepsilon}
\newcommand{\R}{{\Bbb R}}
\newcommand{\E}{{\Bbb E}}
\newcommand{\N}{{\Bbb N}}
\def\j1n{j=1,\dots,n}
\def\j1m{j=1,\dots,m}
\def\i1np1{\in +1}
\def\R{\mathbb{R}}
\def\N{\mathbb{N}}
\def\i1np1{\in +1}
\def\R{\mathbb{R}}
\def\u1{u^{(1)}}
\def\h1{h^{(1)}}
\newcommand{\la}{\lambda}
\newcommand{\RR}{{\mathbb R}}
\newcommand{\pa}{\partial}
\newcommand{\dist} {\mbox{\rm dist }}
\theoremstyle{plain}
\newtheorem{theorem}{Theorem}[section]
\newtheorem{corollary}[theorem]{Corollary}
\newtheorem{lemma}[theorem]{Lemma}
\newtheorem{proposition}[theorem]{Proposition}
\newtheorem{remark}[theorem]{Remark}
\theoremstyle{definition}
\newtheorem{definition}{Definition}
\newtheorem{example}{Example}
\begin{document}

\begin{title}{On the complements of union of open balls and related set classes.}
\end{title}

\author{M. Longinetti \footnote{Marco.Longinetti@unifi.it, Dipartimento DIMAI,
Università degli Studi di Firenze, V.le Morgagni 67/a,  50134
Firenze - Italy}\\  P. Manselli \footnote{Università degli Studi di Firenze.}  \\ A.Venturi \footnote{Università degli Studi di Firenze.}}

\maketitle

\begin{abstract}  Let a $R$-body be a closed set, complement of union of open balls of radius $R$ in the Euclidean space. Properties generalizing similar ones for convex sets are proved for the family of $R$-bodies; properties for the family of sets supported by spheres of
radius $R$ (extension of the supporting hyperplane to convex bodies) are investigated. Comparison of that family with the sets of reach $R$ and with the $R$-rolling sets \cite{Wal} are studied. New properties for the previous  families are proved, by using the $R$-cones, generalization of the convex cones. 
\end{abstract}

{\bf AMS Subjet Classification:} {Primary: 52A01;  Secondary: 52A30}

{\bf Keywords: }{ generalized convexity,  convex hull,  rolling bodies, reach positive sets, support cones.}

\section{Introduction}
Looking for classes of sets more general than the convex sets, one has to look: i) for properties of the convex sets maintained and ii) for properties of the convex sets generalized.

 Here a family of sets is studied: the sets that are the complements  of a non empty union of open balls of  radius $R$, generalizing the property of the closed convex sets as complements of union of open half spaces.  As the family of convex sets, this family of sets is closed with respect to the intersection (see  Proposition \ref{precedentiproprieta} and Remark \ref{minimalfamily}).
 
  This family has been introduced by Perkal \cite{Perkal},  used in Walther \cite{Wal}, in Cuevas, Fraiman, Pateiro-L\'opez \cite{Cue} and in \cite{MLVRbodies}; the family of these sets is  called $R$-bodies  there. 
 
    Here  further properties of the $R$-bodies are provided, also in comparison with other classes of sets.  
    
    In \S \ref{Rbodiessection} the $R$-bodies  are introduced;  some of their properties, proved in \cite{Perkal} and in \cite{MLVRbodies}, are recalled. Other  formulas for the $R$-hulloid of a body $E$, the minimal $R$-body which contains $E$, denoted by $co_R(E)$, are made  explicit in Theorem \ref{inviluppoR-conigeneral} and Corollary \ref{formulainviluppoR-conigeneral}.
    
     In \S  \ref{Rsupportsection} it has been introduced the following definition:
  an open  ball $B$ of radius  $R$ is  $R$-supporting the body $A$ at $a$, when $a\in \pa B$ and $B$ is in the complement of $A$.  
The set $\mathcal{N}_R(A,a)$ of the unit vectors $v$,  normal to a $R$ supporting ball B at $a\in \pa A $ is introduced, generalizing the property of supporting half space for convex sets.   The properties of the intersection of $A$ with the boundary of a $R$-supporting ball at a  boundary point of $co_R(A)$, are studied in Theorem \ref{theorem1suportingball}.

Let us call $A$ a $R$-supported body if $A$ is a body and  $\mathcal{N}_R(A,a)$ is non empty for all $a\in \pa A$.
The  family of $R$-supported bodies contains the family of $R$-bodies.
In \cite{Cue} these sets were called $R$-rolling sets;  in   \cite[Proposition 2]{Cue} the authors introduced a body  with a $R$-supporting unit vectors at every boundary point which is not a $R$-body, see  Remark \ref{viceversaEggleston} for more examples.

In \S \ref{R-bodies,reach,R-rolling sets}
  the $R$-bodies and the $R$-supported bodies have been matched with the sets of reach greater or equal than $R$ and with bodies $E$ having  balls rolling freely inside $E$ and in the complement of $E$, \cite{Wal}. 
  
In  \S  \ref{section R-cones} the family  of $R$-cones is introduced.
An $R$-cone with vertex $x$ is the $R$-body obtained by intersection of the complements of a given family of open  balls of radius $R$, having $x$ on their boundaries. If 
$ R \to \infty$ the $R$-cones have limit convex cones.  
In  Corollary \ref{inviluppoR-coni}    a representation of a $R$-body $A$ by  $R$-  cones is given.

In \S \ref{subsectioncones} the relations between  $\mathcal{N}_R(A,a)$,  the  tangent set $\Tan(A,a) $ and the normal cone $\Nor(A,a)$  are investigated, see Theorems \ref{viceversaregular} and \ref{viceversaregular2}.

A characterization of the family of  sets of reach greater or equal than $R$
is obtained in Theorems \ref{regulararereach}, \ref{regulararereachd=2}   through the property of convexity of $\mathcal{N}_R(A,a)$.

\section{Definitions and preliminaries}\label{preliminaires}

 As in  \cite{Schn}, a non empty closed subset of the Euclidean space $ \RR^d$,   will be called a  {\em body}.

Let $K \subset \R^d$;
$int(K)$ will be  the interior of $K$,  $\pa K$ 
the boundary of $K$, $cl(K)$ or $\overline{K}$    the closure of $K$,
$K^c=\R^d \setminus K$.
 For every set $K \subset\mathbb{R}^d$, $co(K)$ is the convex hull of $K$.  The elements of $\R^d$ are called vectors, the zero vector of $\R^d$ is denoted by $o$.
 Let  $B(z,\rho)=\{x\in\R^d\,:\,|x-z|<\rho\}, 
S^{d-1}=\partial B(o,1) $             
and let $D(z,\rho) =cl(B(z,\rho))$.
The notations $B_\rho (x), D_\rho (x)$ will also be used respectively for open, 
closed balls of radius $\rho$ centered at $x$.
The usual scalar product between vectors $u,v\in \RR^d$ will be denoted by $\langle u,v\rangle $.
The closed segment with end points $x_1,x_2 \in \RR^d$ is denoted by $[x_1,x_2]$. 
A cone $C$ is a subset of $\R^d$ with the following property:
when $x\in C$, then $\forall \la >0$ $\la x \in C $. A closed cone $C$ contains its vertex $o$.  
The set $C\cap (-C)$ is the apex set of a closed cone $C$.
$C$ is a closed pointed cone if $C\cap (-C)=\{o\}$.

 Let $A$ be a body.
Let $q\in  A$; the {\em tangent cone } of $A$ at $q$  is defined in \cite{Federermeasure} as:
$$
\Tan(A,q)=\{v \in \RR^d: \forall \varepsilon > 0 \, ,
\exists  x\in A\cap B_\vare(q)\; , \exists r>0 \,\mbox{s.t.} \; |r(x-q)-v|< \vare\}.$$
Let us recall that if  $\Tan(A,q)\neq \{0\}$ then 
$$S^{d-1}\cap \Tan(A,q)= \bigcap_{\vare >0} cl(\{\frac{x-q}{|x-q|}:\, x\in A\cap B(q,\vare),  x\neq q\}).$$

The {\em normal cone } at
$q$ to $A$ is the non empty closed convex cone,  given by:
\begin{equation}\label{defNor}\Nor(A,q)=\{u\in\RR^d: \langle u,v\rangle \le 0 \quad
\forall v \in \Tan(A,q)\}.
\end{equation}

The {\em dual} cone of a cone  $K$ is
$$K^\star=\{y\in \RR^d : \langle y,x \rangle \geq 0 \quad \forall x \in K\}.$$
Thus 
\begin{equation}
 \label{nor=dualditan}
 \Nor(A,q)=-\{\Tan (A,q)\}^\star.
\end{equation}

Let   $A$  be  a body of
 $ \RR^d$ and $R>0$. Let
\begin{equation}\label{defA_R}
 A_R=\{x\in \RR^d : \dist(x,A) < R\}=\cup_{a\in A}B(a,R) ,   
\end{equation}
\begin{equation}\label{defA'_R}
 A'_R=(A_R)^c=\{x\in \RR^d : \dist(x,A) \geq R \}= \cap_{a\in A}( B(a,R))^c. 
 \end{equation}
\begin{definition}(\cite{Federer}) Let $A$ be a body. Let
$$Unp(A)= \{x\in \RR^d:   \mbox{there exists a unique point } \xi_A(x) \in A \mbox{\, nearest to } x \}.$$

	 If $A \subset \R^d,\, a\in A$, then $reach(A,a)$ is the supremum of all
	numbers $\rho$ such that for every $x\in B(a,\rho)$ 
	there exists a unique point $b\in A$ satisfying:
	$|b-x|=\dist(x,A)$:
	$$reach(A,a):= \sup \{\rho > 0: B(a,\rho) \subset Unp(A)\};$$
	and:
	$$reach(A):=\inf \{reach(A,a): a \in A \}.$$
	\end{definition}
	
\begin{definition}
For $a\in A$, let
$$ Q^{(a)}:= \{v\in \RR^d: \dist(a+v, A)=|v|\}.$$
\end{definition}	

Let us recall the following facts:
\begin{proposition}\label{prop2.1federer}\cite[Theorem 4.8,(2), (7) and (12)]{Federer}. Let $A$ be a body, $a\in A$, then

\item[i)]
$Q^{(a)} \subset \Nor(A,a).$
\item[ii)]
If $x\not\in A, x\in Unp(A), a=\xi_A(x)\in \pa A, reach(A,a)=\rho>0$, then 
$$A \subset(B(a+\rho\frac{x-a}{|x-a|}, \rho))^c.$$
\item[iii)]
If $reach (A,a)> r >0$, then 
$$
\Nor(A,a)=\{\la v: \la\geq 0, |v|=r, \xi_A(a+v) =a\};$$
 $\Tan(A,a)$ is the   dual cone of $-\Nor(A,a)$.
\end{proposition}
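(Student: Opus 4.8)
The plan is to prove the three assertions in turn, reading off everything from the definitions of $Q^{(a)}$, $\Tan(A,a)$ and $\Nor(A,a)$ recalled above, and to isolate the reach hypothesis as the source of the only delicate steps. For (i), which needs no reach hypothesis, I would fix $v\in Q^{(a)}$, so $\dist(a+v,A)=|v|$. Since $a\in A$ this means $a$ realizes the distance from $a+v$, hence $|a+v-x|\ge|v|$ for every $x\in A$. Expanding $|v-(x-a)|^2\ge|v|^2$ yields the basic inequality $\langle v,x-a\rangle\le\tfrac12|x-a|^2$ for all $x\in A$. Now I take any $w\in\Tan(A,a)$; by the definition of the tangent cone there are $x_n\in A$ with $x_n\to a$ and $r_n>0$ with $r_n(x_n-a)\to w$. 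Multiplying the basic inequality by $r_n$ gives $\langle v,r_n(x_n-a)\rangle\le\tfrac12\,(r_n|x_n-a|)\,|x_n-a|$; the factor $r_n|x_n-a|=|r_n(x_n-a)|$ stays bounded (it tends to $|w|$) while $|x_n-a|\to0$, so the right-hand side tends to $0$ and in the limit $\langle v,w\rangle\le0$. As $w$ was arbitrary, this is exactly $v\in\Nor(A,a)$.

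For (ii) I would set $u=(x-a)/|x-a|$. The first ingredient is a monotonicity remark from the triangle inequality: if $a=\xi_A(x)$, then $a$ is a nearest point of $A$ to every point of the segment $[a,x]$, since a strictly closer competitor $b$ for an interior point $p$ would satisfy $|x-b|\le|x-p|+|p-b|<|x-a|$. The second ingredient is a completion of squares: whenever $a$ is the nearest point of $A$ to $a+tu$ one has $\langle b-a,u\rangle\le|b-a|^2/(2t)$ for every $b\in A$, whence $|b-(a+tu)|^2=|b-a|^2-2t\langle b-a,u\rangle+t^2\ge t^2$, i.e.\ the open ball $B(a+tu,t)$ misses $A$; letting $t\uparrow\rho$ then gives $A\subset(B(a+\rho u,\rho))^c$. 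The main obstacle is precisely this passage $t\uparrow\rho$: one must know that $a$ remains the unique nearest point of $A$ to $a+tu$ for all $t\in[0,\rho)$, not merely for $t\le|x-a|$. This is where $reach(A,a)=\rho$ enters, since every such $a+tu$ lies in $B(a,\rho)\subset Unp(A)$, and I would run a continuation argument (monotonicity of the projection along the ray, continuity of $\xi_A$ on the reach ball, and the observation that a first failure would produce two distinct nearest points inside $B(a,\rho)$, contradicting $B(a,\rho)\subset Unp(A)$) to keep the nearest point equal to $a$. I expect this continuation step to require the most care.

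For (iii), with $r<reach(A,a)$, I write $P_r=\{\la v:\la\ge0,\ |v|=r,\ \xi_A(a+v)=a\}$. The inclusion $P_r\subseteq\Nor(A,a)$ is immediate: if $|v|=r$ and $\xi_A(a+v)=a$ then $\dist(a+v,A)=|v|$, so $v\in Q^{(a)}$, and part (i) together with the fact that $\Nor(A,a)$ is a cone gives $\la v\in\Nor(A,a)$. The reverse inclusion is the substantive one: given $u\in\Nor(A,a)$ with $u\neq o$, I must show that the nearest point of $A$ to $a+r\hat u$, $\hat u=u/|u|$, is again $a$; equivalently, that in every normal direction the exterior ball $B(a+r\hat u,r)$ of radius $r<\rho$ avoids $A$. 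This is the converse face of the exterior-ball property of (ii) and again rests on $reach(A,a)>r$, so I would obtain it from the same reach machinery. Finally, the identification of $\Tan(A,a)$ as the dual of $-\Nor(A,a)$ follows by dualizing the defining relation $\Nor(A,a)=-\{\Tan(A,a)\}^\star$ from \eqref{nor=dualditan}: it gives $(-\Nor(A,a))^\star=\{\Tan(A,a)\}^{\star\star}$, and the right-hand side collapses to $\Tan(A,a)$ itself once one knows that under the positive reach hypothesis $\Tan(A,a)$ is a closed convex cone, so that the bidual is the cone itself.
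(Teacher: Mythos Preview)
The paper does not give its own proof of this proposition: it is stated with a citation to Federer \cite[Theorem 4.8(2),(7),(12)]{Federer} and used as a black box. So there is no ``paper's approach'' to compare against; I can only comment on the soundness of your argument relative to Federer's original.

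Your proof of (i) is complete and correct; this is exactly Federer's 4.8(2) combined with the definition of $\Nor$.

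For (ii) and (iii) you have correctly isolated the architecture and the precise spot where the reach hypothesis enters, but the ``continuation argument'' you sketch is where essentially all the content lies, and the mechanism you propose does not work as stated. You write that ``a first failure would produce two distinct nearest points inside $B(a,\rho)$''. It does not, at least not directly: if $\sigma=\sup\{t<\rho:\xi_A(a+tu)=a\}<\rho$, then at $t=\sigma$ the projection is still uniquely $a$ (by continuity of $\xi_A$ on $Unp(A)$), while for each $t\in(\sigma,\rho)$ the projection is a unique $b_t\neq a$ with $b_t\to a$. No single point of $B(a,\rho)$ acquires two nearest points in this picture, so the contradiction with $B(a,\rho)\subset Unp(A)$ is not immediate. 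One has to work harder: Federer obtains the extension (his 4.8(6)) through the quantitative Lipschitz estimate for $\xi_A$ (his 4.8(5)); alternatively one can run a compactness argument on the values $t_b=|b-a|^2/(2\langle u,b-a\rangle)$ and rule out the possibility that a minimizing sequence $b_n$ collapses to $a$, but this last case is genuinely delicate and is exactly what your sketch skips.

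The same gap recurs in (iii): the reverse inclusion $\Nor(A,a)\subseteq P_r$ is again the extension step above (now starting from an infinitesimal normal direction rather than from a given foot point $x$), and for the duality statement you invoke that $\Tan(A,a)$ is convex under the reach hypothesis. That convexity is itself part of Federer's 4.8(12) and is not a general fact about tangent cones; it is proved \emph{after} one has established $\Nor(A,a)\cap B(0,r)=Q^{(a)}\cap B(0,r)$, by showing directly that every $w$ with $\langle w,v\rangle\le 0$ for all $v\in\Nor(A,a)$ lies in $\Tan(A,a)$. So your final sentence assumes a fact that still needs an argument.

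In short: part (i) is fine; parts (ii)--(iii) have the right skeleton, but the steps you flag as ``requiring the most care'' are precisely the substance of Federer's theorem, and your sketch of them needs to be replaced by an actual proof (e.g.\ via Federer's Lipschitz estimate for the metric projection).
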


	\begin{definition}\label{htorto}
Let  $\; b_1, b_2 \in \R^d, | b_1 - b_2 | < 2R $ and let $  \mathfrak{h} (b_1,b_2)  $ be the intersection of all closed balls of radius $ R $ containing $   b_1, b_2  .$
\end{definition}

 \begin{proposition}(\cite[Theorem 3.8]{Colman}, \cite[Lemma 3]{Ratay2})\label{p1} 
The body	$ A $ has reach $ \ge R $ if and only if  for every $\; b_1, b_2 \in A, 0 <| b_1 - b_2 | < 2R $ the set $A \cap   \mathfrak{h} (b_1,b_2) $ is connected.

\end{proposition}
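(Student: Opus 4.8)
The plan is to prove the two implications separately. The implication ``connectedness $\Rightarrow$ reach $\ge R$'' I would establish by contraposition through an explicit ball computation, and the reverse implication by a bottleneck argument whose completion is the delicate point.

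\textbf{Connectedness implies reach $\ge R$ (contrapositive).} Suppose $reach(A)<R$. Then there is $x\notin A$ with $\rho:=\dist(x,A)<R$ admitting two distinct nearest points $b_1,b_2\in A$; set $2\ell=|b_1-b_2|\le 2\rho<2R$. Place the midpoint of $[b_1,b_2]$ at the origin, let $\hat a$ be the unit vector along the segment and $H$ the perpendicular bisector hyperplane. Since $x$ is equidistant from $b_1,b_2$ we have $x\in H$ with $s:=|x|=\sqrt{\rho^2-\ell^2}$, and the open ball $B(x,\rho)$ is disjoint from $A$ with $b_1,b_2\in\pa B(x,\rho)$. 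A direct check (maximizing $|u-c|$ over centres $c$ of $R$-balls containing $b_1,b_2$) shows that $\mathfrak h(b_1,b_2)\cap H$ is the $(d-1)$-ball about the origin of radius $w=R-\sqrt{R^2-\ell^2}$, so that $\ell^2=w(2R-w)$.

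\textbf{Key estimate.} The largest distance from $x\in H$ to a point of this section is $s+w$, and one computes $(s+w)^2<\rho^2=s^2+\ell^2$ iff $2sw+w^2<\ell^2=2Rw-w^2$, i.e.\ iff $s+w<R$, which holds because $s=\sqrt{\rho^2-\ell^2}<\sqrt{R^2-\ell^2}=R-w$ (precisely the strict inequality $\rho<R$). Hence $\mathfrak h(b_1,b_2)\cap H$ lies strictly inside $B(x,\rho)$. Since $A\cap B(x,\rho)=\emptyset$, the set $A\cap\mathfrak h(b_1,b_2)$ avoids $H$ entirely, so it splits along the two open half-spaces bounded by $H$, with $b_1,b_2$ on opposite sides; thus $A\cap\mathfrak h(b_1,b_2)$ is disconnected.

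\textbf{Reach $\ge R$ implies connectedness (framework).} Assume $reach(A)\ge R$ and, for contradiction, that $A\cap\mathfrak h(b_1,b_2)$ is disconnected. Fix a separation $G_1\sqcup G_2$ and a minimizing pair $p_i\in G_i$ with $\delta=|p_1-p_2|=\dist(G_1,G_2)$. The midpoint $\mu=\tfrac12(p_1+p_2)$ lies in $\mathfrak h(b_1,b_2)$ by convexity, and since $\diam\mathfrak h(b_1,b_2)=|b_1-b_2|<2R$ one gets $\delta/2<R$. Minimality yields $A\cap\mathfrak h(b_1,b_2)\cap B(\mu,\delta/2)=\emptyset$: if $a\in G_1$ had $|a-\mu|<\delta/2$ then $|a-p_2|<\delta$, contradicting $\dist(G_1,G_2)=\delta$. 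Consequently $\dist(\mu,A\cap\mathfrak h(b_1,b_2))=\delta/2$ is realized by the two distinct points $p_1,p_2$. If moreover $\dist(\mu,A)=\delta/2$, then $\mu$ has two distinct nearest points at distance $\delta/2<R$, forcing $reach(A)<R$, a contradiction.

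\textbf{The main obstacle.} The difficulty is to exclude $\dist(\mu,A)<\delta/2$, which can happen through a nearest point $a^\ast\in A\setminus\mathfrak h(b_1,b_2)$ lying outside the lens; for the sphere of radius $R$ the metric projection $\xi_A$ of a chord through the centre wanders off $\mathfrak h$, so one cannot simply retract the segment $[p_1,p_2]$ by $\xi_A$ and stay inside the lens. The remedy I would pursue: since $a^\ast\notin\mathfrak h(b_1,b_2)$ there is a defining ball $D(c_0,R)\supseteq\{b_1,b_2\}$ with $a^\ast\notin D(c_0,R)$ while $\mu\in D(c_0,R)$; combining this separating ball with the Federer empty-ball at $a^\ast$ from Proposition~\ref{prop2.1federer}(ii) and a continuity sweep of $\mu$ across $\mathfrak h(b_1,b_2)$, one relocates the bottleneck to a point of $\mathfrak h(b_1,b_2)$ that genuinely carries two nearest points of $A$ at distance $<R$, again contradicting $reach(A)\ge R$. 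The limiting case $reach(A)=R$, where projections onto points at distance exactly $R$ are multivalued, I would dispatch by approximation, slightly shrinking the lens so that all relevant points sit at distance $<R$ from $A$.
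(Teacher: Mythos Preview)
The paper does not prove this proposition: it is quoted from \cite[Theorem 3.8]{Colman} and \cite[Lemma 3]{Ratay2} and used as a black box, so there is no in-paper argument to compare against. That said, your attempt deserves assessment on its own terms.

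Your contrapositive for ``connectedness $\Rightarrow$ reach $\ge R$'' is clean and correct. The identification of $\mathfrak h(b_1,b_2)\cap H$ with the $(d-1)$-ball of radius $w=R-\sqrt{R^2-\ell^2}$ is right (by symmetry one reduces to a planar computation, and the extremal centre is $c=-\sqrt{R^2-\ell^2}\,\hat u$), and the chain $(s+w)^2<\rho^2 \Leftrightarrow s+w<R$, together with $s=\sqrt{\rho^2-\ell^2}<\sqrt{R^2-\ell^2}=R-w$, is accurate. The conclusion that $A\cap\mathfrak h(b_1,b_2)$ misses the bisecting hyperplane and therefore splits is valid.

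The reverse implication, however, is not proved. You correctly isolate the obstruction: the midpoint $\mu$ of a minimizing pair $p_1,p_2$ need not satisfy $\dist(\mu,A)=\delta/2$, because a nearer point of $A$ can lie outside the lens $\mathfrak h(b_1,b_2)$. Your proposed remedy --- combining a separating $R$-ball $D(c_0,R)$ with Federer's empty ball at $a^\ast$ and a ``continuity sweep of $\mu$'' to relocate the bottleneck --- is not an argument, only a heuristic. Nothing in what you wrote forces the existence of a point of $\mathfrak h(b_1,b_2)$ with two genuine nearest points in $A$ at distance $<R$; the sweep may simply push $\mu$ to $\partial\mathfrak h(b_1,b_2)$ without ever producing such a point. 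Likewise, the phrase ``slightly shrinking the lens'' does not handle the boundary case $reach(A)=R$: shrinking the lens changes the set whose connectedness is at stake, and you give no mechanism to transfer the conclusion back to $\mathfrak h(b_1,b_2)$ itself.

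If you want to complete this direction, one route that does work (and is essentially what lies behind the references) is to use Proposition~\ref{prop3.10} directly: the lens $\mathfrak h(b_1,b_2)$ satisfies $\mathfrak h(a,b)\subset \mathfrak h(b_1,b_2)$ for all $a,b\in\mathfrak h(b_1,b_2)$ with $|a-b|<2R$, whence $reach(A\cap\mathfrak h(b_1,b_2))\ge R$; but a set of positive reach with diameter $<2R$ is connected (two components at distance $<2R$ would violate unique projection at the midpoint of a minimizing pair, and here the midpoint automatically realizes $\dist(\cdot,A\cap\mathfrak h)$ because there is no ``outside'' to escape to). That closes the gap your bottleneck argument left open.
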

 
 \begin{remark}\label{rhullandrhulloid} The R-hull of a set $ E $ was introduced in  \cite[Definition 4.1]{Colman} as the minimal set $\hat{E}$ of reach
 $ \ge R $  containing $ E. $ Therefore, if $reach(A) \geq R$, then $A$ coincides with its $R$-hull. The R-hull of a set E may not exist, see \cite[Example 2]{Colman}. \end{remark} 
 
\begin{proposition}\cite[Theorem 4.4 and Theorem 4.6]{Colman}\label{theorem4.4} Let $A \subset \R^d$.
\begin{itemize}
\item[i)] If $reach (A'_R) \geq R$ then $A$ admits $R$-hull $\hat{A}$ and   
$$\hat{A}=(A'_R)'_R.$$
\item[ii)] If $A$ admits $R$-hull then $reach(A'_R)\geq R$.
\end{itemize}
\end{proposition}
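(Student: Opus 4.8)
The plan is to run both parts through the single operation $E\mapsto E'_R$ of (\ref{defA'_R}) and its iterate. First I would record that $'_R$ is inclusion-reversing, that $E\subseteq (E'_R)'_R$ for every body $E$, and hence that $E\mapsto (E'_R)'_R$ is a closure operator whose fixed points are precisely the $R$-bodies. Since by (\ref{defA'_R}) the set $A'_R$ is itself an $R$-body and the family of $R$-bodies is intersection-closed (Proposition~\ref{precedentiproprieta}), this identifies $(A'_R)'_R$ with the smallest $R$-body containing $A$, that is with the $R$-hulloid $co_R(A)$ of Theorem~\ref{inviluppoR-conigeneral}. The same formalism gives $((A'_R)'_R)'_R=A'_R$, so $A'_R$ and $co_R(A)=(A'_R)'_R$ are dual $R$-bodies. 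All of this is purely set-theoretic and carries no reach information.

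The first geometric ingredient I would isolate is that \emph{$reach(F)\ge R$ forces $F$ to be an $R$-body}, i.e. $F=(F'_R)'_R$. Only the inclusion $(F'_R)'_R\subseteq F$ needs proof: for $x\notin F$ with $0<\dist(x,F)<R$, Proposition~\ref{prop2.1federer}(ii) yields an open ball $B(x^\ast,R)$ disjoint from $F$ with $|x-x^\ast|<R$, so $x^\ast\in F'_R\cap B(x,R)$ and hence $x\notin (F'_R)'_R$; the cases $\dist(x,F)\ge R$ and $\dist(x,F)=0$ are immediate. Consequently every set of reach $\ge R$ containing $A$ is an $R$-body containing $A$, hence contains $co_R(A)$.

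These two observations reduce the statement to a single analytic claim $(\star)$: \emph{if $reach(X)\ge R$ then $reach(X'_R)\ge R$}. For (i) I would apply $(\star)$ to $X=A'_R$ to get $reach(co_R(A))=reach((A'_R)'_R)\ge R$; then $co_R(A)$ is a set of reach $\ge R$ containing $A$ and, by the previous paragraph, is contained in every such set, so it is the minimal one, i.e. the $R$-hull, and equals $(A'_R)'_R$. For (ii), if the $R$-hull $\hat A$ exists it is contained in every set of reach $\ge R$ that contains $A$; taking the sets $(B(y,R))^c$, which have reach $R$ and contain $A$ for each $y\in A'_R$, gives $\hat A\subseteq (B(y,R))^c$ for all such $y$, i.e. $\hat A\subseteq (A'_R)'_R=co_R(A)$; combined with $co_R(A)\subseteq\hat A$ this yields $\hat A=co_R(A)$, whence $reach(co_R(A))\ge R$, and $(\star)$ applied to $X=co_R(A)$ gives $reach(A'_R)=reach((co_R(A))'_R)\ge R$.

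The main obstacle is $(\star)$, the only place where metric geometry rather than the lattice structure of $'_R$ is used. I would prove it through the connectedness criterion of Proposition~\ref{p1}: assuming $reach(X'_R)<R$, there are $p_1,p_2\in X'_R$ with $0<|p_1-p_2|<2R$ such that $X'_R\cap\mathfrak{h}(p_1,p_2)$ is disconnected, and since $p_i\in X'_R$ means $B(p_i,R)\cap X=\emptyset$, the separation of the two tips of the spindle $\mathfrak{h}(p_1,p_2)$ is caused by $X_R$ cutting across the thin lens while $X$ avoids $B(p_1,R)$ and $B(p_2,R)$. The heart of the proof is to convert this configuration, using the external supporting balls supplied by $reach(X)\ge R$ (Proposition~\ref{prop2.1federer}(ii)), into points $q_1,q_2\in X$ with $0<|q_1-q_2|<2R$ and $X\cap\mathfrak{h}(q_1,q_2)$ disconnected, contradicting $reach(X)\ge R$ via Proposition~\ref{p1}. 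I expect the delicate step to be locating $q_1,q_2$ inside $X$ from the disconnection of $X'_R$ while keeping both radius-$R$ constraints simultaneously under control; this is precisely the computation carried out in \cite{Colman}, which I would either reproduce or simply invoke as Theorems 4.4 and 4.6 there.
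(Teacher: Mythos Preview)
The paper does not supply its own proof of this proposition; it is recorded in \S\ref{preliminaires} purely as a citation of \cite[Theorems~4.4 and~4.6]{Colman}. So there is no in-paper argument to compare against, and your proposal is effectively a reconstruction of what must happen in \cite{Colman}.

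Your reduction is correct and clarifying. The lattice facts about $E\mapsto E'_R$, the identification $(A'_R)'_R=co_R(A)$ (Proposition~\ref{precedentiproprieta}\,\textbf{a}), and your argument that $reach(F)\ge R$ forces $F$ to be an $R$-body (this is the content of Remark~\ref{reach>=RimpliesRbodies} later in the paper) are all sound. With these in hand, both (i) and (ii) do reduce to your claim~$(\star)$. It is worth noting that $(\star)$ is literally part~(ii) specialised to a set $X$ that is its own $R$-hull; so your scheme amounts to showing that (i) follows from (this special case of) (ii), and that the general case of (ii) reduces, via $\hat A=co_R(A)$, to the same special case.

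The only genuine gap is the one you flag yourself: the proof of $(\star)$ via Proposition~\ref{p1}. You set up the configuration (a disconnection of $X'_R\cap\mathfrak h(p_1,p_2)$ with $B(p_i,R)\cap X=\emptyset$) and name the target (a disconnection of $X\cap\mathfrak h(q_1,q_2)$), but the passage from one to the other is not carried out, and that geometric step is exactly where the work lies. Since you then propose to invoke \cite{Colman} for it, your argument and the paper's treatment ultimately rest on the same external reference; you have added a useful structural decomposition, but not an independent proof.
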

 
Let us also recall the following result, see also \cite[Lemma 4.3]{Ratay1} :
 \begin{proposition}\cite[Theorem 3.10]{Colman} \label{prop3.10}
  Let $A\subset \R^d$ be a closed set such that $reach(A) \geq R > 0$.  If $D\subset \R^d$ is a closed set such that for every $a,b \in D$
  with  $|a-b|<2R$: 
  $$\mathfrak{h}(a,b)\subset D, \quad A\cap D\neq \emptyset$$
  holds, then $reach(A\cap D)\geq R$.
 \end{proposition}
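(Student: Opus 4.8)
The plan is to deduce the conclusion directly from the connectedness characterization of reach in Proposition \ref{p1}, used in both directions. Write $C := A \cap D$. Since $A$ and $D$ are closed and $A \cap D \neq \emptyset$, the set $C$ is a body, so it is legitimate to ask whether $reach(C) \geq R$; by Proposition \ref{p1} this holds precisely when $C \cap \mathfrak{h}(b_1,b_2)$ is connected for every pair $b_1,b_2 \in C$ with $0 < |b_1-b_2| < 2R$. Thus the entire proof reduces to verifying this connectedness for an arbitrary such pair.

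Fix $b_1,b_2 \in C$ with $0 < |b_1-b_2| < 2R$. The first key step is to exploit the hypothesis on $D$: since $b_1,b_2 \in C \subset D$ and $|b_1-b_2| < 2R$, the assumed $R$-convexity of $D$ gives $\mathfrak{h}(b_1,b_2) \subset D$. Consequently $D \cap \mathfrak{h}(b_1,b_2) = \mathfrak{h}(b_1,b_2)$, and the triple intersection collapses:
\[
C \cap \mathfrak{h}(b_1,b_2) = A \cap \bigl(D \cap \mathfrak{h}(b_1,b_2)\bigr) = A \cap \mathfrak{h}(b_1,b_2).
\]

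The second key step is to apply Proposition \ref{p1} to $A$ in the forward direction. Since $b_1,b_2 \in C \subset A$, since $reach(A) \geq R$, and since $0 < |b_1-b_2| < 2R$, that proposition guarantees that $A \cap \mathfrak{h}(b_1,b_2)$ is connected. By the identity just established, $C \cap \mathfrak{h}(b_1,b_2)$ is connected as well. As the pair $b_1,b_2$ was arbitrary, the converse direction of Proposition \ref{p1} then yields $reach(A \cap D) = reach(C) \geq R$, which is the claim.

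I expect no serious obstacle here: once Proposition \ref{p1} is available, the argument is essentially a set-theoretic identity, and the only points requiring care are that $C$ is genuinely a body, so that the characterization applies, and that $b_1,b_2$ lie simultaneously in $A$ and in $D$, which is automatic from $C \subset A$ and $C \subset D$ (the degenerate case where $C$ is a single point is handled vacuously, the pair condition being empty). The conceptual content is carried entirely by the connectedness characterization of reach through the $\mathfrak{h}$-sections; the value of the statement lies in recognizing that the $R$-convexity of $D$ is exactly the condition needed to make the $D$-factor disappear from the relevant section, reducing the question about $A \cap D$ to the already known reach property of $A$.
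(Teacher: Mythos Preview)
Your argument is correct: the hypothesis $\mathfrak{h}(b_1,b_2)\subset D$ collapses $C\cap\mathfrak{h}(b_1,b_2)$ to $A\cap\mathfrak{h}(b_1,b_2)$, and then Proposition~\ref{p1} applied first to $A$ and then to $C$ does all the work. Note, however, that the paper does not supply its own proof of this proposition; it is quoted from \cite[Theorem~3.10]{Colman} as a preliminary fact, so there is no in-paper argument to compare against. Your route via the connectedness characterization is the natural one given that Proposition~\ref{p1} is already available in the preliminaries, and it is essentially the same idea underlying the original reference.
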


\section{ $R$-bodies}\label{Rbodiessection}
 
 Next definitions  have been introduced in  \cite{Perkal} and in \cite{MLVRbodies}.
 
Let  $ R $ be a fixed positive real number. From now on
  $ B $ will be every open ball of radius $  R $,
    $B(x)$ will be the open ball of center  $ x \in  \RR^d$ and radius $  R $
and $D=cl(B), D(x)=cl(B(x))$.
 \begin{definition}\label{d1}
 Let $A$ be a body,  $ A $ will be called a R-body  if $\;  \forall y \in A^c$,  there exists an open  ball $ B $ in $\RR^d$,  satisfying: $ y\in B \subset  A^c. $ 
This is equivalent to say:  
 $$   A^c = \cup \{B : B \cap A = \emptyset \}; $$
 that is:
 $$   A= \cap \{B^c : B \cap A = \emptyset  \}  .$$
Notice that $\RR^d$ is  a $R$-body, since there are no points $y$ in its complement.
  \end{definition}
  
 \begin{definition}\label{d+1}
  Let $ E \subset \RR^d$ be a body with the property that there exists $B\subset E^c$. The  body:
  $$  co_R(E) :=  \cap \{B^c : B \cap E = \emptyset  \}  $$
 will be called the \textbf{R-hulloid} of $ E$, see \cite{MLVRbodies}. 
  If there are no balls $B \subset E^c$ then $co_R(E)=\RR^d$.
   \end{definition}

 \begin{remark} In \cite{Perkal} the sets defined in Definition \ref{d1} are called $2R$ convex sets and the sets defined in Definition \ref{d+1} are called $2R$ convex hulls. On the other hand  Valentine \cite[pp. 99-101]{val} and Fenchel \cite[p.42]{Fenc} use the name  $R$-convex sets for convex sets with special properties depending on $R$.  To avoid misunderstandings  we decided in \cite{MLVRbodies} to call $R$-bodies and $R$-hulloids   the sets defined in Definition \ref{d1} and in Definition \ref{d+1} respectively.
   \end{remark}
  \begin{remark}\label{R-hull=R-hulloid} The R-hulloid of a bounded set always exists. Let us notice that $ co_R(E)$ is a $R$-body (by definition) and $ E \subset   co_R(E) .$
  Moreover $ A$ is a $R$-body if and only if  $ A =   co_R(A)$.
 
   \end{remark}  
  Clearly every  convex  body $E $ is a $R$-body (for all positive $R$) and its convex hull $co(E)=E$ coincides with its $R$-hulloid.

 \begin{proposition}\cite[Theorem 3.11 and 3.10]{MLVRbodies} \label{RbodiesinH} For $d \geq 2$, every closed  non empty subset  of a  affine linear proper subset of $\RR^d$ is a  $R$-body;   closed subsets of  the boundary of a ball of radius greater or equal than $R$ are $R$-bodies too. 
 \end{proposition}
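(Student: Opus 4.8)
The plan is to verify the defining property of an $R$-body directly from Definition \ref{d1}: it suffices to show that every point $y$ in the complement of $A$ lies in some open ball of radius $R$ that misses $A$. In both assertions $A$ is contained in a set of ``codimension one'' (a hyperplane $H$ in the first case, a sphere $S = \pa B(z,\rho)$ with $\rho \ge R$ in the second), and the whole content is geometric. For $y$ off this thin set one has room to slide an $R$-ball off it entirely, whereas for $y$ lying on the thin set but outside $A$ one must instead produce an $R$-ball that meets the thin set only in a tiny neighbourhood of $y$, exploiting that $A$ is closed so that $\dist(y,A)>0$.

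For the first assertion I would embed the proper affine subspace in a hyperplane $H$ and choose coordinates so that $H = \{x_d = 0\}$. If $y \notin H$, say $y_d > 0$, then either $y_d \ge R$, in which case $B(y)$ already avoids $H \supseteq A$, or $0 < y_d < R$, in which case the ball of radius $R$ centred at $(y_1,\dots,y_{d-1},R)$ is internally tangent to $H$, contains $y$, and is disjoint from $H$; the case $y_d<0$ is symmetric. The remaining case $y \in H \setminus A$ is where $d \ge 2$ is used: picking $\rho \in (0,R]$ with $\overline{B(y,\rho)} \cap A = \emptyset$, I would take the $R$-ball centred at $(y_1,\dots,y_{d-1},\sqrt{R^2 - \rho^2/4})$, whose trace on $H$ is the $(d-1)$-disk of radius $\rho/2$ about $y$, hence lies in $B(y,\rho)$ and misses $A$.

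For the second assertion, write $s = |y-z|$ and work along the radial direction $u=(y-z)/s$, the case $y=z$ being settled by $B(z,R) \subseteq B(z,\rho)$ (which is where $\rho \ge R$ first enters). If $s \neq \rho$, I would place the centre at $c = z + tu$ and choose $t$ so that $\overline{B(c,R)}$ is internally or externally tangent to $S$ while still containing $y$, namely $t = \min(s,\rho-R)$ for $s<\rho$ and $t = \max(s,\rho+R)$ for $s>\rho$; this yields an $R$-ball disjoint from all of $S$, hence from $A$. The delicate case is $y \in S \setminus A$: setting $\delta = \dist(y,A) > 0$ and $c = z + tu$ with $t$ slightly larger than $\rho - R$, the intersection $B(c,R) \cap S$ is a spherical cap centred at $y$ of angular radius $\theta_0$ with $\cos\theta_0 = (\rho^2 + t^2 - R^2)/(2\rho t)$, and $\cos\theta_0 \to 1$ as $t \to (\rho-R)^+$, so choosing $t$ close enough forces this cap inside $B(y,\delta)$ and the ball misses $A$.

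I expect the main obstacle to be exactly the ``on the thin set but not in $A$'' configurations, where the ball cannot avoid the thin set and one must quantify how the trace of the ball shrinks as the ball approaches the tangent position, comparing it against the gap $\dist(y,A)$ furnished by closedness. It is here, together with the interior points of $B(z,\rho)$, that the hypotheses are genuinely needed: $d \ge 2$ guarantees a free normal direction at points of $H$, while $\rho \ge R$ both lets an $R$-ball sit inside $\overline{B(z,\rho)}$ and makes the internal-tangency position $t = \rho - R \ge 0$ available (indeed, for the full sphere with $\rho < R$ no $R$-ball can separate the centre from $S$, so the restriction $\rho \ge R$ is unavoidable).
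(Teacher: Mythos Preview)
The paper does not supply a proof here; the proposition is quoted from \cite{MLVRbodies}, so there is no argument to compare against. Your direct verification of Definition~\ref{d1} is the natural route, and your treatment of the affine case is correct.

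There is, however, a genuine gap in the sphere case at the endpoint $\rho=R$. For $y\in S\setminus A$ you take $c=z+tu$ with $t$ slightly larger than $\rho-R$ and claim that $\cos\theta_0=(\rho^2+t^2-R^2)/(2\rho t)\to 1$ as $t\to(\rho-R)^+$. This is true when $\rho>R$, but when $\rho=R$ one has $\rho-R=0$ and $\cos\theta_0=t/(2R)\to 0$: the cap $B(c,R)\cap S$ is then nearly a hemisphere, not a point, and cannot be forced inside $B(y,\delta)$. So the argument, as written, fails exactly at the boundary value $\rho=R$ that the statement includes (and your own closing remark says is unavoidable).

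The repair is immediate: approach the externally tangent position instead, taking $t$ slightly smaller than $\rho+R$. Then $|y-c|=|\rho-t|<R$, so $y\in B(c,R)$, and the same formula gives $\cos\theta_0\to 1$ as $t\to(\rho+R)^-$ for every $\rho>0$, since at $t=\rho+R$ the numerator equals $\rho^2+(\rho+R)^2-R^2=2\rho(\rho+R)$, matching the denominator. With this single change your argument goes through uniformly for all $\rho\ge R$.
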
 
 
 \begin{remark}\label{remcl(cup)=co(A)} 
 Perkal  in \cite{Perkal} proved that, if $A$ is a body and $int(A)\neq \emptyset$, then:
  \begin{equation}\label{cl(cup)=co(A)}
 cl(\bigcup_{r  > R}co_r(A))=co(A).
 \end{equation}
   Walther  (\cite{Wal})  claimed that if $A$ is a body and $ int(co(A)) \neq \emptyset$, then \eqref{cl(cup)=co(A)} holds.
 \end{remark}
 If $int(co(A))=\emptyset$, equality \eqref{cl(cup)=co(A)} may   not be true:  let $A$ be a body  not connected subset of an affine linear proper  subset of $\RR^d$. Then, by Proposition \ref{RbodiesinH}, for all positive $r$, the $r$-hulloids 
 $co_r(A)=A$ and $A \neq co(A)$; thus  \eqref{cl(cup)=co(A)}  does not hold.

 \begin{proposition}\label{precedentiproprieta}
 Let $E$ be a non empty set.
    The following facts have been proved in \cite{Perkal}.
  
 \begin{itemize}
 
 \item \textbf{a} $\quad   co_R(E) =( E'_R)'_R; $
  \item \textbf{b} \; Let $ A^{(\alpha)} , \alpha \in \mathcal{A}$
  be R-bodies, then $\cap _{\alpha  \in \mathcal{A}}$ $A^{(\alpha)} $ is an R-body; 
\item  \textbf{c} \; $co_R(E)\subset co(E)$ for all $R> 0$;
\item  \textbf{d} \;  $0< R_1 \leq R_2 \Rightarrow co_{R_1}(E) \subseteq co_{R_2}(E)$.
 \end{itemize}
   \end{proposition}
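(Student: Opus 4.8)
The plan is to establish the four items in the order (a), (b), (d), (c), since the structural identity in (a) and the ball-nesting estimate behind (d) both feed the separation argument that (c) requires. Throughout, the governing remark is that an open ball $B=B(x,R)$ satisfies $B\cap E=\emptyset$ precisely when $\dist(x,E)\ge R$, i.e. exactly when $x\in E'_R$ by \eqref{defA'_R}.

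For (a), that remark gives $\bigcup\{B:B\cap E=\emptyset\}=\bigcup_{x\in E'_R}B(x,R)=(E'_R)_R$ by \eqref{defA_R}, and passing to complements in Definition \ref{d+1} yields $co_R(E)=\big((E'_R)_R\big)^c=(E'_R)'_R$. For (b), let $A=\bigcap_{\alpha\in\mathcal A}A^{(\alpha)}$ and take $y\in A^c$; then $y\notin A^{(\alpha)}$ for some $\alpha$, and since $A^{(\alpha)}$ is an $R$-body there is an open ball $B$ of radius $R$ with $y\in B\subseteq (A^{(\alpha)})^c\subseteq A^c$. Thus every point of $A^c$ sits in such a ball, which together with the closedness of $A$ (an intersection of closed sets) is exactly the characterization in Definition \ref{d1}.

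For (d) the key geometric fact is that when $R_1\le R_2$ one can inscribe a radius-$R_1$ ball through a prescribed interior point into any radius-$R_2$ ball through that point. Arguing by contraposition, suppose $y\notin co_{R_2}(E)$, so there is a ball $B(z_2,R_2)$ with $y\in B(z_2,R_2)$ and $B(z_2,R_2)\cap E=\emptyset$. Choose $z_1$ to be the point of the closed ball $D(z_2,R_2-R_1)$ nearest to $y$; a direct check (two cases, according to whether $|y-z_2|\le R_2-R_1$) shows $|z_1-z_2|\le R_2-R_1$ and $|y-z_1|<R_1$, so that $B(z_1,R_1)\subseteq B(z_2,R_2)$ and $y\in B(z_1,R_1)$. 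Hence $B(z_1,R_1)\cap E=\emptyset$ and $y\notin co_{R_1}(E)$, giving $co_{R_1}(E)\subseteq co_{R_2}(E)$.

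For (c) I would fix a closed half-space $H=\{x:\langle x,u\rangle\le c\}$ with $|u|=1$ containing $E$ and show $co_R(E)\subseteq H$. It suffices to cover the open half-space $H^c$ by radius-$R$ balls disjoint from $E$: given $p$ with $\langle p,u\rangle=c+\delta$, $\delta>0$, a center $z=p+su$ with $s$ in the nonempty range $[\max(0,R-\delta),R)$ gives $B(z,R)\subseteq H^c$ and $p\in B(z,R)$, whence $B(z,R)\cap E=\emptyset$ and $co_R(E)\subseteq B(z,R)^c$. Therefore $co_R(E)\cap H^c=\emptyset$, i.e. $co_R(E)\subseteq H$, and intersecting over all half-spaces containing $E$ yields $co_R(E)\subseteq\overline{co(E)}$. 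The main obstacle is upgrading the closed convex hull to $co(E)$ itself: a point $y\in\overline{co(E)}\setminus co(E)$ lies on some supporting hyperplane $L=\{\langle\cdot,u\rangle=c\}$ with $E\subseteq\{\langle\cdot,u\rangle\le c\}$, and there a radius-$R$ ball through $y$ cannot be pushed off $E$ simply by translating along $u$. I would resolve this by noting that $co(E)\cap L=co(E\cap L)$, so $y\notin co(E\cap L)$, and then recursing on the dimension of $L$ to produce a separating $R$-ball; when $co(E)$ is already closed (for instance $E$ bounded) this step is vacuous and $co_R(E)\subseteq co(E)$ is immediate.
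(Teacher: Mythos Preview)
The paper does not prove this proposition at all; it simply attributes the four items to Perkal \cite{Perkal}. So there is no argument to compare against, and your proofs must stand on their own.

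Your arguments for (a), (b), and (d) are correct and clean, and so is your half-space covering argument showing $co_R(E)\subseteq\overline{co(E)}$. The gap is in the final step of (c). The identity $co(E)\cap L=co(E\cap L)$ for a supporting hyperplane $L$ is indeed correct, but the recursion you sketch does not close the argument: working inside $L$ can at best produce a $(d-1)$-dimensional ball of radius $R$ in $L$ that avoids $E\cap L$, whereas to conclude $y\notin co_R(E)$ you need a full $d$-dimensional ball avoiding all of $E$. Points of $E\setminus L$ lying just inside the open half-space may crowd any $d$-ball you try to place through $y$, and nothing in the inductive information about $E\cap L$ controls them.

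A direct way to repair this, which also shows why no counterexample exists, is to localise. If $y\in co_R(E)$, then every ball $B(z,R)\ni y$ meets $E$, and any point $e\in B(z,R)\cap E$ satisfies $|e-y|\le|e-z|+|z-y|<2R$. Hence $y\in co_R(E)$ if and only if $y\in co_R\big(E\cap D(y,2R)\big)$. The set $E':=E\cap D(y,2R)$ is compact, so $co(E')$ is closed, and your half-space argument already yields $co_R(E')\subseteq\overline{co(E')}=co(E')\subseteq co(E)$. Thus $y\in co(E)$, and (c) follows for arbitrary closed $E$.
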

 It is easy to prove also the following:
  \begin{itemize}
 \item \textbf{e}  \; $\, \pa E \subset \pa co_R(E) $;
  \item  \textbf{f} \; $int(E)\subset int(co_R(E))$ for all $R> 0$.
 \end{itemize}

In \cite[Theorem 3.4]{MLVRbodies} the following fact was proved:
\begin{equation}\label{formulacoR(E)}
 co_R(E)=E_R\cap \Big(\pa(E_R)\Big)_R'.
\end{equation}

Moreover:
 \begin{theorem}\label{inviluppoR-conigeneral}
Let $E$ be a body. Then:

\begin{equation}\label{formulainviluppoR-conigeneral2*}
(\pa E_R)'_R =co_R(E)\bigcup  E'_{2R}.
\end{equation}
\end{theorem}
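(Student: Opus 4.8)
The plan is to read off $(\pa E_R)'_R$ pointwise from the distance function $f(x):=\dist(x,E)$, splitting into two cases by a connectedness argument on the ball $B(x,R)$.

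First I would record that $\pa E_R=\{x:f(x)=R\}$. Since $E_R=\{f<R\}$ is open we have $\pa E_R\subseteq\{f=R\}$; conversely, if $f(x)=R$ and $a\in E$ is a point nearest to $x$, then the points $x+t(a-x)$ with $t\in(0,1)$ belong to $E_R$ and tend to $x$, so $x\in \cl(E_R)\setminus E_R=\pa E_R$. Write $S:=\pa E_R=f^{-1}(R)$. Now $x\in S'_R$ means exactly that the open ball $B(x,R)$ is disjoint from the level set $S=\{f=R\}$. Because $B(x,R)$ is connected and $f$ is continuous, $f(B(x,R))$ is an interval not containing $R$; as $f\ge 0$, either $f<R$ on all of $B(x,R)$ or $f>R$ on all of $B(x,R)$. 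I claim these two alternatives are precisely membership in $co_R(E)$ and in $E'_{2R}$.

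The first alternative says $B(x,R)\subseteq\{f<R\}=E_R$, equivalently $B(x,R)\cap E'_R=\emptyset$, equivalently $\dist(x,E'_R)\ge R$, equivalently $x\in (E'_R)'_R=co_R(E)$, where the last identity is Proposition~\ref{precedentiproprieta}(a). The second alternative, $B(x,R)\subseteq\{f>R\}$, I claim is equivalent to $f(x)\ge 2R$, i.e.\ $x\in E'_{2R}$. The implication ``$f(x)\ge 2R\Rightarrow f>R$ on $B(x,R)$'' is the triangle inequality $f(y)\ge f(x)-|x-y|>2R-R=R$. For the converse, suppose $f(x)<2R$; from the center of the ball $f(x)>R$, so $R<f(x)<2R$. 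With $a\in E$ nearest to $x$ and $u:=(a-x)/f(x)$, the points $y_s:=x+su$ for $s\in[f(x)-R,\,R)$ lie in $B(x,R)$ and satisfy $f(y_s)\le|y_s-a|=f(x)-s\le R$, contradicting $f>R$ on $B(x,R)$; here the interval $[f(x)-R,R)$ is nonempty exactly because $f(x)<2R$.

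Combining the two equivalences gives $S'_R=co_R(E)\cup E'_{2R}$; the reverse inclusions $co_R(E)\subseteq S'_R$ and $E'_{2R}\subseteq S'_R$ are immediate, since on the respective balls $f\neq R$, so $B(x,R)\cap S=\emptyset$. I expect the delicate step to be the forward direction of the second alternative: one has to manufacture, from a nearest point of $E$, an explicit point inside the \emph{open} ball at which $f$ has already dropped to at most $R$, and to keep the strict and non-strict inequalities (open vs.\ closed balls, and the value $R$ itself being excluded) consistent throughout.
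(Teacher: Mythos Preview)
Your proof is correct. The argument is essentially the dual of the paper's: the paper passes to complements and proves $(\pa E_R)_R=(co_R(E))^c\cap E_{2R}$ by writing $(\pa E_R)_R=\bigcup\{B(x):\dist(x,E)=R\}$ and then asserting the identity
\[
\bigcup_{\dist(x,E)=R} B(x)=\Big(\bigcup_{\dist(x,E)\ge R}B(x)\Big)\cap\Big(\bigcup_{\dist(x,E)\le R}B(x)\Big),
\]
after which the two factors are recognized as $(co_R(E))^c$ (by Definition~\ref{d+1}) and $E_{2R}$ (via the Minkowski-sum formula $E_{2R}=E_R+B(o,R)$). You instead work directly on $(\pa E_R)'_R$ and read everything off the distance function, making the connectedness/intermediate-value step explicit; that step is exactly what justifies the paper's displayed identity above, which the paper states without further comment. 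So the two routes coincide at the core, with your version spelling out the analytic justification and the paper's version packaging it in set-algebraic form.
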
  
\begin{proof}

Let us prove  \eqref{formulainviluppoR-conigeneral2*}. This is equivalent to prove that:
\begin{equation}\label{formulapaE'R}
(\pa E_R)_R =(co_R(E))^c\bigcap  E_{2R}.
\end{equation}
By (\ref{defA_R})    with  $A=\pa E_R $, it follows that: 
 $$\Big(\pa E_R\Big)_R=\cup \{B(x): x\in \pa(E_R)\}.$$
Then, as $\pa E_R
=\{x : \dist (x,E)=R\}$:
\begin{equation}\label{secondformulapag8*}
(\pa E_R)_R= \bigcup \{ B(x):  \dist(x,E)=R \}.
\end{equation}
Therefore:
\begin{equation}\label{secondformulapag8*conapice}
(\pa E_R)'_R= \bigcap \{B^c(x):  \dist(x,E)=R\}. 
\end{equation}
 From  \eqref{secondformulapag8*}, it holds:
 \begin{equation}\label{thirdformulapag8*}
(\pa E_R)_R= 
\end{equation} 
 $$ \Big( \cup \{B(x):   \dist(x,E)\geq R\}\Big) \bigcap
 \Big( \cup \{B(x):   \dist(x,E) \leq  R\}\Big).$$

Since: 
$$E_{2R}= E+B(o,2R)=(E+B(o,R))+B(o,R)=E_R+B(o,R),$$
 then:
 \begin{equation}\label{E2R'}
E_{2R}= \cup \{B(x): \,  \dist(x,E) \leq R\}.
\end{equation}
By Definition \ref{d+1}:
\begin{equation}\label{coR(E)compl}
(co_R(E))^c=\cup \{B(x): \,  \dist(x,E)\geq R\}.
\end{equation}
 \eqref{formulapaE'R} follows   from \eqref{thirdformulapag8*} and last two equalities.
\end{proof}

\subsection{$R$-supporting  balls and $R$-supported bodies}\label{Rsupportsection}

\begin{definition} \label{defRsupporting} Let $A$ be a body. Let $a\in \pa A$. Let $v\in S^{d-1}$.
 We say that the ball $B(a+Rv)$  is $R$-{\em supporting} $A$ at $a$ if:
 $$A\subset (B(a+Rv))^c;$$
 $v$ will be called  a   unit vector $R$-supporting $A$ at $a$.

  In other words, $v\in S^{d-1}$ is a unit vector $R$-supporting $A$ at $a$ if and only if for all $b\in A$  
\begin{equation}\label{4*}
\langle v,a-b\rangle \geq -\frac{|a-b|^2}{2R}.
\end{equation}
\end{definition}

In \cite[\S 2.2]{Gol} a closed ball $D_R=cl(B)$ of radius $R$ is defined an outer support (closed) ball  of $A$ if $D_R\cap A\neq\emptyset$ and $D_R\cap int(A)=\emptyset$. The closure of a $R$-supporting ball $B$ is an outer closed support ball $D_R$ and conversely.

Golubyatnikov and Rovensky \cite{Gol} 
have defined the class 
 $\mathcal{K}_2^{1/R}$  of bodies  $A$, with non empty interior, satisfying the following property:
 \begin{equation}\label{defK21/R}
 \forall x\in A^c \mbox{\, there exists a closed ball \, } D_R\ni x: D_R\cap int(A)=\emptyset.
 \end{equation}
 
The class of $R$-bodies, is strictly contained in the class  $\mathcal{K}_2^{1/R}$, see \cite[Theorem 6.1]{MLVRbodies}. 
\begin{definition} If $a\in \pa A$, let us denote:
\begin{equation}\label{defvR-supporting}
\mathcal{N}_R(A,a) :\equiv \{ v \in S^{d-1}: A \subset (B(a+Rv))^c\}
\end{equation}
 the set of unit vectors $R$-supporting $A$ at $a$.
\end{definition}
 Cuevas and others in \cite[Proposition 2]{Cue}  proved that for a $R$-body $A$ and for $a\in \pa(A)$ the set of $R$-supporting unit vectors $\mathcal{N}_R(A,a)$ is not empty; they also  
call a body $A$ a $R$-rolling set, if for every $a\in \pa A$
there is a $R$-supporting ball B of $A$; therefore $\mathcal{N}_R(A,a)$ is non empty.
Since the $R$-rolling set name is  used  in \cite{Wal} with another  meaning (see \S \ref{R-bodies,reach,R-rolling sets}), here a different name has been used.
 \begin{definition} A body  $A$ is called a $R$-supported body if for every  point $a \in \pa A$ the set  $\mathcal{N}_R(A,a)$ of the directions $R$-supporting $A$
 is non empty. 
\end{definition}
From \cite[Theorem 3.6]{MLVRbodies}  a more general result follows:
\begin{proposition}\label{corollary le matematiche}
If $E$ is a body and $A=co_R(E), a\in \pa A$ then there exists a ball $B\subset A^c$ , with $ a\in \pa B$; moreover $\pa B \cap \pa E\neq \emptyset$.
\end{proposition}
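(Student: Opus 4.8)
The plan is to prove the statement for $A=co_R(E)$ and a boundary point $a\in\pa A$ by producing a ball $B\subset A^c$ with $a\in\pa B$ and then upgrading it to one whose boundary meets $\pa E$. First I would invoke the already-established equality \eqref{coR(E)compl}, namely $(co_R(E))^c=\cup\{B(x):\dist(x,E)\ge R\}$. Since $a\in\pa A$, every neighbourhood of $a$ meets $A^c$, so I would pick a sequence $y_n\to a$ with $y_n\in A^c$; each $y_n$ lies in some open ball $B(x_n)$ with $\dist(x_n,E)\ge R$ and $B(x_n)\cap E=\emptyset$. The centres $x_n$ lie in $\overline{B(a,2R)}$ for large $n$, hence after passing to a subsequence $x_n\to x$ with $\dist(x,E)\ge R$ and $|x-a|\le R$. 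Because $a\in A=co_R(E)$ and $a\in B(x_n)^c$ for each $n$ (as $A\subset B(x_n)^c$), passing to the limit gives $|x-a|\ge R$, so $|x-a|=R$, i.e. $a\in\pa B(x)$; moreover $B(x)\subset A^c$ since $\dist(x,E)\ge R$. This yields the first assertion.

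The harder part is the second assertion, that the supporting ball can be chosen with $\pa B\cap\pa E\neq\emptyset$. Here I would exploit the freedom to slide the ball. Given the ball $B(x)$ from the first step with $a\in\pa B(x)$ and $B(x)\cap E=\emptyset$, I would consider moving its centre along the ray from $a$ through $x$, or more generally rotating/translating so as to keep $a$ on the boundary while pushing the ball toward $E$ until it first touches $\pa E$. Concretely, I would look at the family of balls $B(z)$ of radius $R$ with $a\in\pa B(z)$ (equivalently $|z-a|=R$, so $z$ ranges over the sphere $\pa B(a,R)$) satisfying $B(z)\cap E=\emptyset$; this family is nonempty by the first step. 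The idea is to select within this family a ball whose closure touches $E$. The obstacle is ensuring such a touching ball exists and still has $a$ on its boundary: one cannot simply grow the radius (it is fixed at $R$), so one must argue by a compactness/continuity argument on the admissible set of centres $z\in\pa B(a,R)$ with $\overline{B(z)}\cap E=\emptyset$, and show that the infimum of $\dist(z,E)$ over this closed admissible set is attained and equals $R$, forcing $\pa B(z)\cap\pa E\neq\emptyset$.

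To make that argument work I would use the definition of $A=co_R(E)=\cap\{B^c:B\cap E=\emptyset\}$ together with the minimality of the $R$-hulloid. The key point is that if \emph{every} ball $B(z)$ with $a\in\pa B(z)$ and $B(z)\cap E=\emptyset$ were strictly separated from $E$ (i.e. $\dist(z,E)>R$ for all admissible $z$), then one could shrink the complement slightly near $a$ and contradict the fact that $a\in\pa(co_R(E))$, since $a$ would then lie in the interior of $A$. More precisely, I expect the touching condition to follow from property \textbf{e} of Proposition \ref{precedentiproprieta}, $\pa E\subset\pa co_R(E)$, combined with the characterization \eqref{formulacoR(E)} of $co_R(E)$; the cited source \cite[Theorem 3.6]{MLVRbodies} presumably establishes exactly that the extremal supporting ball at a boundary point of the hulloid must touch the original set $E$.

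I expect the main obstacle to be the second assertion: guaranteeing contact with $\pa E$ rather than mere disjointness. The limiting ball from the compactness argument in the first step might osculate only $co_R(E)$ and float free of $E$ itself, so the essential work is a separate sliding or extremal-selection argument showing that among all $R$-supporting balls at $a$ at least one has its boundary meeting $\pa E$. This is where I would lean most heavily on the structural result \cite[Theorem 3.6]{MLVRbodies} that the statement says this proposition generalizes.
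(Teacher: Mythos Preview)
The paper does not actually give a proof of this proposition; it merely records that it follows from \cite[Theorem 3.6]{MLVRbodies}. So there is nothing to compare your argument against except that citation.

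Your compactness argument for the first assertion is correct and self-contained, and goes well beyond what the paper offers. The one step worth making explicit is that $B(x)\subset A^c$ follows because $\dist(x,E)\ge R$ gives $B(x)\cap E=\emptyset$, and then $A=co_R(E)\subset B(x)^c$ by Definition~\ref{d+1}; you state this but a reader might want it spelled out.

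For the second assertion your minimization sketch over $Z=\{z:|z-a|=R,\ \dist(z,E)\ge R\}$ has a genuine gap: the claim that if $\dist(z,E)>R$ for all $z\in Z$ then $a$ would lie in $int(A)$ is asserted but not proved, and it is not obvious. Rather than appealing to the external reference, you can close this gap by a small refinement of your own first-step argument. Since $a\in co_R(E)\subset E_R$ (this inclusion follows directly from Definition~\ref{d+1}, or from \eqref{formulacoR(E)}), for large $n$ one has $\dist(y_n,E)<R$ while $\dist(x_n,E)\ge R$. By the intermediate value theorem there is a point $x_n'$ on the segment $[y_n,x_n]$ with $\dist(x_n',E)=R$; moreover $|y_n-x_n'|\le|y_n-x_n|<R$, so $y_n\in B(x_n')$ and $B(x_n')\cap E=\emptyset$. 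Running your compactness argument with $x_n'$ in place of $x_n$ yields a limit center $x'$ with $|x'-a|=R$ and $\dist(x',E)=R$, hence $\pa B(x')\cap E\neq\emptyset$; since $B(x')\cap E=\emptyset$, this intersection lies in $\pa E$. This makes the whole proof self-contained, without invoking \cite{MLVRbodies}.
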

\begin{remark}\label{viceversaEggleston} Proposition  \ref{corollary le matematiche} implies  that, if $A$ is a $R$-body, then for all $a\in \pa A$ the set of $R$-supporting unit vectors $\mathcal{N}_R(A,a)$ is non empty. Then $A$ is a $R$-supported body, The converse of this fact it is not true, see Proposition 2 in \cite{Cue}. Other examples: let $V$ be the set of the vertices of an equilateral triangle $T$ with circumradius less than $R$.  At each   $v\in V=\pa V$, $\mathcal{N}_R(V,v)$ is non empty;  $V$ is not a $R$-body since the center of $T$ lies in $V^c$ but does not belong to an open disk of radius $R$, avoiding $V$.
In \cite[formula (16), Theorem 5.7]{MLVRbodies} it has been defined 
 a body $E$ with non empty interior, with a $R$-supporting ball at every point of its boundary  which is not a $R$-body.  
\end{remark}
The following facts hold:
\begin{itemize}
\item[a)] the family of $R$-bodies is a proper subset of the family
of $R$-supported bodies;
\item[b)] every closed subset of the boundary of a $R$-body is a
$R$-supported body;
\item[c)]the intersection of two $R$-supported bodies is a $R$-supported body;
\item[d)] if a $R$-body $A$ is not connected, then the closed connected components of $A$ are $R$-supported bodies.
\end{itemize}

\begin{lemma}\label{proprieta'supportedbodies}Let $E$ be a $R$-supported body and $A=co_R(E)$, then:
\begin{itemize}
\item[i)] \, $\pa E \subset \pa A$;
\item[ii)] \, $int (E) \subset int(A)$;
\item[iii)]\, $  \mathcal{N}_R(E,a)= \mathcal{N}_R(A,a)\,\, \forall a\in \pa E$;
\item[iv)] \, $co_R(E) = E_R \cap \{(B(x))^c: x= a+ R\theta, a\in \pa E, \theta \in  \mathcal{N}_R(E,a)\}$.
\end{itemize}
\end{lemma}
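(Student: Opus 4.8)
The plan is to prove the four items essentially in the order they are listed, using the earlier results on $R$-hulloids, since $E$ being $R$-supported gives us pointwise supporting balls that we must promote to the global statements. For item (i), I would first recall property \textbf{e} of Proposition \ref{precedentiproprieta}, which already gives $\pa E \subset \pa co_R(E)=\pa A$ for \emph{any} body $E$; so (i) is immediate and needs no use of the $R$-supported hypothesis. Likewise (ii) follows directly from property \textbf{f} of the same proposition, giving $\inte(E)\subset \inte(co_R(E))=\inte(A)$. These two items are just citations and set up the boundary/interior bookkeeping for the harder parts.

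The substance is in item (iii), the equality $\mathcal{N}_R(E,a)=\mathcal{N}_R(A,a)$ for all $a\in\pa E$. One inclusion is easy: since $E\subset A$, any ball $B(a+Rv)$ with $A\subset (B(a+Rv))^c$ also satisfies $E\subset (B(a+Rv))^c$, so $\mathcal{N}_R(A,a)\subset\mathcal{N}_R(E,a)$; note $a\in\pa E\subset\pa A$ by (i), so both sets are defined at $a$. For the reverse inclusion, suppose $v\in\mathcal{N}_R(E,a)$, i.e. $E\subset (B(a+Rv))^c$, equivalently $B(a+Rv)\subset E^c$ is an open ball of radius $R$ missing $E$. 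By Definition \ref{d+1} of the $R$-hulloid, $A=co_R(E)=\cap\{B^c: B\cap E=\emptyset\}$, so $A$ is contained in the complement of \emph{every} such ball; in particular $A\subset (B(a+Rv))^c$, which is exactly $v\in\mathcal{N}_R(A,a)$. Thus (iii) reduces to the defining intersection property of the hulloid and does not even require $E$ to be $R$-supported.

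Item (iv) is where I expect the real work. Set $\mathcal{F}:=\{(B(x))^c: x=a+R\theta,\ a\in\pa E,\ \theta\in\mathcal{N}_R(E,a)\}$ and write $S:=E_R\cap \bigcap\mathcal{F}$. The inclusion $A\subset S$ should follow by combining $A\subset E_R$ (which holds because $A=co_R(E)=(E'_R)'_R\subset E_R$; one checks $(E'_R)'_R\subset E_R$ directly from \eqref{defA_R}--\eqref{defA'_R}) together with the fact, just established in (iii), that each $\theta\in\mathcal{N}_R(E,a)=\mathcal{N}_R(A,a)$ yields $A\subset(B(a+R\theta))^c$, so $A$ lies in every member of $\mathcal{F}$. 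The delicate direction is $S\subset A$. The plan is to take $y\notin A$ and produce an open ball $B$ of radius $R$ with $y\in B\subset E^c$, then argue that the center of a suitable such ball can be written as $a+R\theta$ with $a\in\pa E$ and $\theta\in\mathcal{N}_R(E,a)$, so that $y$ is excluded by the corresponding member of $\mathcal{F}$ unless $y\notin E_R$. The key tool is Proposition \ref{corollary le matematiche}: for $y\in A^c$ there is a ball $B\subset A^c\subset E^c$ with $\pa B\cap\pa E\neq\emptyset$; picking $a\in\pa B\cap\pa E$ and letting $\theta$ be the unit vector from $a$ to the center of $B$, we get the center equal to $a+R\theta$ with $\theta\in\mathcal{N}_R(E,a)$. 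The main obstacle is the boundary case: one must show that if $y\in E_R$ then $y$ is caught by such a ball, i.e. that the covering of $A^c$ by balls supported at boundary points of $E$ is rich enough to separate every point of $E_R\setminus A$; here I would invoke Proposition \ref{corollary le matematiche} for the point $y$ itself and verify that the resulting ball belongs to $\mathcal{F}$, so that $y\in B$ contradicts $y\in\bigcap\mathcal{F}$. This reduces (iv) to carefully matching the ball furnished by Proposition \ref{corollary le matematiche} with the index set defining $\mathcal{F}$, which is the step requiring the $R$-supported hypothesis on $E$ to guarantee that $\pa E$ is large enough to carry all the needed supporting directions.
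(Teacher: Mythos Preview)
Your treatment of (i)--(iii) is correct and essentially matches the paper (the paper argues (i) directly by exhibiting a supporting ball rather than citing property \textbf{e}, but this is cosmetic; the two inclusions in (iii) are handled identically).

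For (iv) there is a genuine gap, and you are working harder than necessary. Proposition~\ref{corollary le matematiche} is stated for points $a\in\pa A$, not for arbitrary $y\in A^c$: it furnishes a supporting ball \emph{at a boundary point of the hulloid} whose boundary meets $\pa E$. For a point $y$ in the interior of $A^c$ you only know, from $A$ being an $R$-body, that some $B(x)\ni y$ avoids $A$; nothing yet forces $\dist(x,E)=R$ rather than $>R$, so the center $x$ need not lie in your index set $\{a+R\theta:a\in\pa E,\ \theta\in\mathcal{N}_R(E,a)\}$. This can be repaired (slide $x$ along the segment toward a nearest point of $E$ to $y$ until $\dist(\cdot,E)=R$; convexity keeps $y$ inside), but you do not say this, and it is not what the cited proposition provides.

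The paper bypasses this entirely. By \eqref{formulacoR(E)} one has $co_R(E)=E_R\cap(\pa E_R)'_R$, and by \eqref{secondformulapag8*conapice} $(\pa E_R)'_R=\bigcap\{(B(x))^c:\dist(x,E)=R\}$. Item (iv) then follows at once from the elementary set identity
\[
\pa E_R=\{x:\dist(x,E)=R\}=\{a+R\theta:\ a\in\pa E,\ \theta\in\mathcal{N}_R(E,a)\},
\]
valid for any closed $E$: for $x$ with $\dist(x,E)=R$ take $a\in\pa E$ nearest to $x$ and $\theta=(x-a)/R$; conversely $\theta\in\mathcal{N}_R(E,a)$ gives $B(a+R\theta)\cap E=\emptyset$ while $a\in E$ and $|a+R\theta-a|=R$. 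This parametrization of $\pa E_R$ is the missing idea; it delivers both inclusions simultaneously and makes the appeal to Proposition~\ref{corollary le matematiche} unnecessary.
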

\begin{proof}
Let $a\in \pa E$. Let $\theta \in  \mathcal{N}_R(E,a)$, then 
$$a\in \pa B(a+R\theta) \mbox{\quad and \quad } B(a+R\theta) \cap E = \emptyset.$$ By definition of $A=co_R(E)$, $B(a+R\theta) \subset A^c$. Therefore $a\in \pa A$ and  $\theta \in \mathcal{N}_R(A,a)$. Then, i)
is proved  and $  \mathcal{N}_R(E,a) \subseteq \mathcal{N}_R(A,a)$.
Since $E\subset A=co_R(E)$, then ii) is obvious.
To prove iii),  let  $a\in \pa E$; then for $\theta \in \mathcal{N}_R(A,a)$ it holds $B(a+R\theta)\cap A= \emptyset$. Since $E\subset A$ then $B(a+R\theta)\cap E= \emptyset$. Then, $\theta \in \mathcal{N}_R(E,a)$
and equality in iii) is proved.
  
 From \eqref{formulacoR(E)}, \eqref{secondformulapag8*conapice} and
 $$\pa E_R= \{x: dist(x,E)=R\}= \{a+R\theta: a \in \pa E, \theta \in  \mathcal{N}_R(E,a)\},$$
  iv) follows.
\end{proof}

\subsection{ $R$-supporting balls to the  $R$-hulloid of a body}\label{Rsupportingsection+}

  \begin{definition}\label{defconvonS}
 Let $S$ be a sphere in $\RR^d$ of radius $\rho > 0$ centered at the origin; there is a one-to-one map between closed cones in $\RR^d$ and closed subsets of $S$.
 For every closed cone $K$ of $\RR^d$ let $\mathcal{K}=K \cap S$;  let $\mathcal{K}$ be a closed  subset of $S$,
 let $K=\{\la v: v\in \mathcal{K}, \la \geq 0 \}$ the related cone in $\RR^d$.

 For  $\mathcal{K} \subset S$ let us define  the spherical convex hull
$$co^S_{sph}(\mathcal{ K}):\equiv co(K) \cap S.$$
 \end{definition}
 A closed convex  cone $C$  is pointed if: 
$$ap(C):=C\cap (-C)=\{o\}.$$ 

Last definitions have similar extensions for a sphere $S$ centered at every point $c\in \R^d$, not necessarily at the origin $o$    and  for cones with  vertex $c\neq o$.

Let us introduce the following notations in this section:

Let $\,  H:=\{x\in \R^d: \langle x,v \rangle =0\}$, with
 $v\in  S^{d-1}$,
$$H^+:=\{x\in \R^d: \langle x, v\rangle >0\}, \, H^-:=\{x\in \R^d: \langle x, v\rangle <0\};$$
$$\mathbf{H}^+= \{x\in \R^d ; \langle x,v\rangle \geq 0\}, \,
\mathbf{H}^-:=\{x\in \R^d: \langle x, v\rangle \leq 0\};$$  
 As in \cite[\S 1.3]{Schn}, let us define 
 $A$ and $B$ strictly separated by $H$ if
 $$A \subset H^+, \quad  B \subset H^-$$
 or conversely.

 Let $E$ be a body, $a\in \pa co_R(E)$, $a\not \in E$,  $B(o,R)$ an open ball  $R$-supporting $co_R(E)$ at $a$. With no restrictions it can be assumed that $B$ is centered at $o$. 
 
  By Proposition \ref{corollary le matematiche}, $\pa B \cap E\neq \emptyset$. Let $S=\pa B$.
 
 \begin{lemma}\label{lemmapaolosupporting} Let $E$ be a body, $a\in \pa co_R(E)$. Let $B(o,R)$ be  an open ball  $R$-supporting $co_R(E)$ at $a$. Let $S=\pa B(o,R)$ and let   $F$ be the cone related to 
  $\mathfrak{F} =  E\cap S$.
   Assume that $C=co(F)$ is pointed and  $a\not \in C $.  Then:
  \begin{itemize}
  \item[i)] there exists $v\in S^{d-1}$ and $H:=\{x\in \R^d: \langle x,v \rangle =0\}$ so that $H$ strictly separates $\{a\}$ from $\mathfrak{F}$, that is:
  $$\{a\} \subset H^+, \quad \mathfrak{F} \subset H^- ;$$
  \item[ii)] let $t>0, B_t=\{x:|x-tv|< R\}$. If  $t$ is sufficiently small, then
  \begin{equation}\label{AinBtFnotinBt}
  a \in B_t,  \quad B_t\cap E =\emptyset.
    \end{equation}
\end{itemize}   
   \end{lemma}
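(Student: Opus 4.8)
The plan is to establish (i) by a separation argument that turns the pointedness of $C$ and the hypothesis $a\notin C$ into a hyperplane through the origin, and then to deduce (ii) from an elementary inclusion estimate for $a\in B_t$ together with a compactness argument for $B_t\cap E=\emptyset$.

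For (i), I would first record the basic geometry: $\mathfrak{F}=E\cap S$ is compact (the closed set $E$ meets the compact sphere $S$) and, by Proposition \ref{corollary le matematiche}, non empty; every $x\in\mathfrak{F}$ has $|x|=R$, and $|a|=R$ since $a\in\pa B(o,R)$. The pointedness of $C=co(F)$ forces $o\notin co(\mathfrak{F})$: if $o=\sum_i\lambda_i x_i$ were a nontrivial convex combination with $x_i\in\mathfrak{F}$ and $\lambda_1>0$, then $\lambda_1 x_1\in C$ and $-\lambda_1 x_1=\sum_{i\ge 2}\lambda_i x_i\in C$, so $\lambda_1 x_1\in C\cap(-C)=\{o\}$, contradicting $|x_1|=R>0$. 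The crucial reduction is then to prove
\[
o\notin co\big(\{a\}\cup(-\mathfrak{F})\big).
\]
Any element of this compact convex set has the form $\mu a-(1-\mu)w$ with $\mu\in[0,1]$ and $w\in co(\mathfrak{F})$; were it $o$, the case $\mu=0$ gives $w=o\in co(\mathfrak{F})$ (just excluded), the case $\mu=1$ gives $a=o$ (excluded since $|a|=R$), and $0<\mu<1$ gives $a=\tfrac{1-\mu}{\mu}w\in C$ (as $C$ is a convex cone containing $co(\mathfrak{F})$), contradicting $a\notin C$. Once $o\notin co(\{a\}\cup(-\mathfrak{F}))$ holds, strict separation of the origin from this compact convex set yields $v\in S^{d-1}$ with $\langle v,y\rangle>0$ for all $y\in\{a\}\cup(-\mathfrak{F})$, that is $\langle v,a\rangle>0$ and $\langle v,x\rangle<0$ for every $x\in\mathfrak{F}$, which is exactly (i).

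For (ii), the inclusion $a\in B_t$ is a one-line computation: since $|a|=R$,
\[
|a-tv|^2=R^2-2t\langle a,v\rangle+t^2<R^2 \qquad\text{for } 0<t<2\langle a,v\rangle,
\]
because $\langle a,v\rangle>0$. The substantive claim is $B_t\cap E=\emptyset$ for small $t$, which I would prove by contradiction. Suppose there are $t_n\downarrow 0$ and $e_n\in E$ with $|e_n-t_nv|<R$. From $B(o,R)\cap E=\emptyset$ we have $|e_n|\ge R$, hence
\[
-2t_n\langle e_n,v\rangle+t_n^2=|e_n-t_nv|^2-|e_n|^2\le |e_n-t_nv|^2-R^2<0,
\]
which forces $\langle e_n,v\rangle>t_n/2>0$. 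The points $e_n$ are bounded and satisfy $R\le|e_n|<R+t_n\to R$, so a subsequence converges to some $e^\ast\in E$ with $|e^\ast|=R$, i.e. $e^\ast\in\mathfrak{F}$, while passing to the limit gives $\langle e^\ast,v\rangle\ge 0$. This contradicts $\langle x,v\rangle<0$ on $\mathfrak{F}$ from (i); hence $B_t\cap E=\emptyset$ for all sufficiently small $t>0$.

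The separation theorem and the elementary inequalities are routine; I expect the main obstacle to be the second half of (ii). The delicate point is that $B(o,R)$ may genuinely touch $E$ precisely along $\mathfrak{F}$, so one cannot simply invoke continuity of the balls $B_t$ in $t$. The argument must exploit the \emph{strict} inequality $\langle x,v\rangle<0$ on $\mathfrak{F}$ provided by (i) to ensure that the perturbed balls recede from the contact set, and the compactness step is exactly what upgrades this pointwise strictness into a uniform threshold $t_0>0$ below which the perturbed ball avoids all of $E$.
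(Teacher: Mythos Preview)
Your proof is correct and complete, but both halves proceed along somewhat different lines from the paper's argument. For (i), the paper invokes Klee's sharp separation theorem for closed convex cones (applied to the ray through $a$ and the cone $C$), whereas you bypass this citation by showing directly that $o\notin co(\{a\}\cup(-\mathfrak{F}))$ via the three-case analysis on $\mu$, and then apply the standard strict separation of a point from a compact convex set; your route is more self-contained and avoids the external reference. For (ii), the paper splits $B_t$ as $(B_t\cap B)\cup(B_t\setminus B)$, notes that the first piece lies in $E^c$ since $B$ does, and argues that the crescent $\overline{B_t}\setminus B$ lies in $H^+$ and converges (as $t\to 0^+$) to the compact set $S\cap\mathbf{H}^+$, which is disjoint from $E$ by (i). Your sequential contradiction argument accomplishes the same thing more directly: extracting a limit point $e^\ast\in\mathfrak{F}$ with $\langle e^\ast,v\rangle\ge 0$ gives an immediate clash with the strict inequality from (i), and it handles the unbounded-$E$ case cleanly without any appeal to Hausdorff-type convergence of the crescents. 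Both arguments ultimately rest on the same mechanism---the strict sign of $\langle\,\cdot\,,v\rangle$ on $\mathfrak{F}$ pushing the perturbed ball away from the contact set---but your packaging is slightly more elementary.
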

  \begin{proof}  Let us consider $C= co(F)=co(\mathfrak{F}\cup \{o\})$ and let $A=co(\{a,o\})$, then $ap(A)=\{o\}, ap(C)=\{o\}$. Since $a\not \in C$, then $A\cap C=\{o\}$.
  A result of Klee \cite[Theorem 2.7]{Klee}, in $\R^d$,  see also \cite[Theorem 4.2]{Soltan}, states that for two closed convex cones $A$ and $C$ satisfying  the above conditions there exists a hyperplane $H:=\{x\in \R^d: \langle x,v \rangle =0\}$, which {\em sharply} separates $A$ and $C$, that is:
  $$A\setminus ap(A) \subset H^+, \quad C\setminus ap(C) \subset H^-.$$
    So i) is proved.

    Let us prove now ii).

  Let $\cos \phi= \langle \frac{a}{|a|}, v \rangle >0 $
 since $a \in  H^+$; if $0<t< 2R\cos \phi$ then  
  $$|a-tv|^2=R^2+t^2 |v|^2-2Rt\cos \phi < R^2, $$
 so  $a\in B_t$.
  
 Let us show that $B_t\cap E =\emptyset$. 
 The open ball $B_t=(B_t\cap B) \cup (B_t\setminus B)$ is union of  two non empty  sets, for $ 0 < t$ small enough.  Since  $B\cap E=\emptyset  $ by assumption, then $(B_t\cap B) \cap E =\emptyset$.
   Moreover $\overline{B_t} \setminus B \subset H^+$ for $t>0$ and 
    for $t \to 0^+$ the set $\overline{B_t} \setminus B \to  S \cap \mathbf{H}^+ $.   
 By i)  $\mathfrak{F} \subset H^-$, thus $S\cap\mathbf{H}^+$ and $E$ are disjoint closed sets; 
 then  
  for $t>0 $ small enough, $ B_t \setminus B $ and $E$ are disjoint  sets too.
 Then $B_t$ and $E$ are disjoint sets 
 for $t>0$ small enough.
   \end{proof}

\begin{theorem}\label{theorem1suportingball} Let $E$ be a body in $\RR^d$. Let   $a \in \pa co_R(E)\setminus E$. Then, there exists a ball $B$,   
  $R$-supporting $co_R(E)$ at $a$. Let $S=\pa B$ (with no restrictions it can be assumed that $B$ is centered at $o$).  Then 
  \begin{itemize}
  \item[i)] $\pa E \cap S $ contains at least two points.
    \end{itemize}
Let  $\mathcal{F}=\pa E \cap S$  and   $F$ be the related   cone and $C=co(F)$, then
 \begin{itemize}
  \item[ii) ]if $C$ is  a pointed  cone, then  there exist distinct points  
 $x_1, \ldots, x_s \in \pa E \cap S$, $2\leq s\leq d$, , such that 
 \begin{equation}\label{xincosphproper}
a \in co^S_{sph}(\{ x_1, \ldots, x_s\}).
\end{equation}
  \end{itemize}
    \end{theorem}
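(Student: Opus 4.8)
The plan is to let Lemma \ref{lemmapaolosupporting} carry the whole argument, applied in contrapositive form. First I would invoke Proposition \ref{corollary le matematiche} to produce a ball $B$ that $R$-supports $co_R(E)$ at $a$ and satisfies $\pa E\cap S\neq\emptyset$, normalizing $B=B(o,R)$ so that $S=\pa B(o,R)$. The decisive remark I would isolate is that the conclusion of Lemma \ref{lemmapaolosupporting} is \emph{never} compatible with $a\in\pa co_R(E)$: were the lemma applicable, it would hand us a ball $B_t$ with $a\in B_t$ and $B_t\cap E=\emptyset$; but by Definition \ref{d+1} any ball disjoint from $E$ satisfies $co_R(E)\subset B_t^{\,c}$, so $a\notin co_R(E)$, contradicting $a\in\pa co_R(E)\subset co_R(E)$. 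Hence, whenever $C=co(F)$ happens to be pointed, the separation hypothesis ``$a\notin C$'' of the lemma must fail, and I may conclude $a\in C$.

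Before using this I would record the bookkeeping fact $E\cap S=\pa E\cap S$: any $p\in E\cap\pa B$ is approached from inside $B\subset E^c$, hence lies in $\pa E$. This identifies the set $\mathfrak F=E\cap S$ of Lemma \ref{lemmapaolosupporting} with the set $\mathcal F=\pa E\cap S$ of the statement. For part i) I would argue by contradiction: if $\mathcal F=\{p\}$ were a single point, then $F$ is the ray $\{\la p:\la\ge 0\}$ and $C=co(F)=F$ is pointed (because $|p|=R\ne 0$); moreover $C\cap S=\{p\}$, while $a\in S$ and $a\ne p$ since $a\notin E\ni p$, so $a\notin C$. Both hypotheses of Lemma \ref{lemmapaolosupporting} would then hold, triggering the contradiction above. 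Therefore $\pa E\cap S$ must contain at least two points.

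For part ii) I would assume $C=co(F)$ is pointed. The first paragraph then forces $a\in C$, and since $a\in S$ this reads
\[
a\in C\cap S=co(F)\cap S=co^S_{sph}(\mathcal F)
\]
by Definition \ref{defconvonS}. As $co(F)$ is exactly the set of finite nonnegative combinations of the points of $\mathcal F=\pa E\cap S$, the conical form of Carath\'eodory's theorem in $\R^d$ lets me write $a=\sum_{i=1}^{s}\la_i x_i$ with $\la_i>0$, with $x_1,\dots,x_s\in\pa E\cap S$ linearly independent (hence distinct), and $s\le d$. I would then rule out $s=1$: a single term would give $a=\la_1 x_1$ with $|a|=|x_1|=R$, forcing $\la_1=1$ and $a=x_1\in\pa E\subset E$, against $a\notin E$. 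Thus $2\le s\le d$, and since $a$ lies in the convex cone generated by $x_1,\dots,x_s$ while $a\in S$, I obtain \eqref{xincosphproper}.

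The one step I expect to require care is the translation into Lemma \ref{lemmapaolosupporting}: I must make sure that the objects $\mathfrak F$, $F$, $C$ there are literally the $\mathcal F$, $F$, $C$ of the statement (this is where $E\cap S=\pa E\cap S$ is used), and that the ball $B_t$ the lemma produces really places $a$ in the \emph{open} set $(co_R(E))^c$, so that the contradiction with $a\in\pa co_R(E)$ is genuine. Once this incompatibility is in place, parts i) and ii) are simply the two regimes in which the pointedness hypothesis holds, closed off by the Carath\'eodory count.
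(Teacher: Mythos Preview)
Your proposal is correct and follows essentially the same approach as the paper: invoke Proposition~\ref{corollary le matematiche} to get the supporting ball, use Lemma~\ref{lemmapaolosupporting} in contrapositive to force $a\in C$ whenever $C$ is pointed, and then extract a Carath\'eodory-type representation (the paper cites Fenchel's Theorem~7 for the last step, you use conical Carath\'eodory, which amounts to the same thing). Your write-up is in fact a bit more careful than the paper's: you explicitly verify $E\cap S=\pa E\cap S$ so that the $\mathfrak F$ of the lemma matches the $\mathcal F$ of the statement, and you spell out why $s=1$ is impossible, both of which the paper leaves implicit.
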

 
   \begin{proof}  By Proposition \ref{corollary le matematiche}, if $a\in \pa  co_R(E)\setminus E$, there exists at least a support ball $B\subset (co_R(E))^c \subset E^c$ such that  $a\in \pa B=S$ and  $ \mathfrak{F}=\pa E \cap S$ is a non empty closed set. Let us prove i). By contradiction, let 
$\mathfrak{F}=\{z\}$. Since $a\not \in E$, $a\neq z$, then by Lemma \ref{lemmapaolosupporting}, if $t>0$ is sufficiently small, 
$a\in B_t\subset E^c$, impossible as 
$a \in co_R(E)$.

Let us prove ii). If $C$ is a pointed cone, by ii) of  Lemma \ref{lemmapaolosupporting}, if $a\not \in C$, there exists $B_t\ni a$, $B_t \cap E=\emptyset$; impossible since $a \in co_R(E)$. Thus $a \in C$.
 By  \cite[Theorem 7  pag. 12 ]{Fenc}  $a$ is a linear combinations of vectors of $F$, then \eqref{xincosphproper} follows.
 \end{proof}
 
 \begin{remark} Under the notations  of Lemma \ref{lemmapaolosupporting}, in case $C$ is not a pointed cone, let $r=dim(ap(C))$. Then  \cite[Theorem 7]{Fenc} (for convex cones) proves that  there exist $1\leq  r \leq d $  distinct points
$x_1, \ldots, x_{r+1} \in \pa E \cap S$,    such that \begin{equation}\label{xincosphnotproper}
\bigcup_{\la_1, \la_2,\ldots , \la_{r+1} \geq 0}\{\sum_{i=1}^{r+1}\la_i x_i\}=C\cap(-C).
\end{equation}
  \end{remark}
Let us notice  that \eqref{xincosphnotproper}, with $r=1$, implies that there are two opposite points $x_1,x_2$ on the spherical  surface $S$. When $r=2$ there are three points $x_1,x_2,x_3$ such that the circumradius of 
$co(\{x_1,x_2,x_3\})$ is equal to $R$.

\section{ $R$-supported bodies vs sets of reach $\geq R$ and $R$-rolling sets}\label{R-bodies,reach,R-rolling sets}

\begin{lemma} \label{lemmainclusionnormalsets}
Let $A$ be a body, $a \in \pa A$.  Let $\mathcal{N}(A,a)=\Nor(A,a)\cap S^{d-1}$, then

  \begin{equation}\label{N=N_R}
\mathcal{N}_R(A,a)\subseteq\mathcal{N}(A,a). 
\end{equation}

\end{lemma}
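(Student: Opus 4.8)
The plan is to show the inclusion $\mathcal{N}_R(A,a)\subseteq \mathcal{N}(A,a)$ by taking an arbitrary unit vector $v\in\mathcal{N}_R(A,a)$ and verifying that $v\in\Nor(A,a)$; since $|v|=1$ this immediately places $v$ in $\mathcal{N}(A,a)=\Nor(A,a)\cap S^{d-1}$. By the definition of the normal cone in \eqref{defNor}, it suffices to prove that $\langle v,w\rangle\le 0$ for every $w\in\Tan(A,a)$.

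The key computational input is the characterization \eqref{4*}: since $v$ is a unit vector $R$-supporting $A$ at $a$, for all $b\in A$ we have
\begin{equation}\label{supportingineq}
\langle v,a-b\rangle\geq -\frac{|a-b|^2}{2R},
\end{equation}
equivalently $\langle v,b-a\rangle\le \frac{|b-a|^2}{2R}$. First I would reduce to tangent vectors of the form $w=\frac{b-a}{|b-a|}$ with $b\in A$, $b\neq a$, close to $a$. For such a $b$, dividing \eqref{supportingineq} by $|b-a|>0$ gives $\langle v,\tfrac{b-a}{|b-a|}\rangle\le \frac{|b-a|}{2R}$. As $b\to a$ along points of $A$, the right-hand side tends to $0$, so any limit direction $w$ obtained this way satisfies $\langle v,w\rangle\le 0$.

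Next I would pass from these limiting unit directions to an arbitrary $w\in\Tan(A,a)$. Recall from the preliminaries that, when $\Tan(A,a)\neq\{0\}$, the set $S^{d-1}\cap\Tan(A,a)$ equals $\bigcap_{\varepsilon>0}\cl\{\frac{x-a}{|x-a|}:x\in A\cap B(a,\varepsilon),\,x\neq a\}$. So every unit tangent vector $w$ is a limit of normalized difference quotients $\frac{x_k-a}{|x_k-a|}$ with $x_k\in A$, $x_k\to a$. Applying the bound $\langle v,\frac{x_k-a}{|x_k-a|}\rangle\le\frac{|x_k-a|}{2R}$ and letting $k\to\infty$ yields $\langle v,w\rangle\le 0$ for every unit $w\in\Tan(A,a)$. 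Since $\Tan(A,a)$ is a cone, the inequality $\langle v,w\rangle\le 0$ then extends to all $w\in\Tan(A,a)$ by positive homogeneity (the case $w=0$ being trivial). Hence $v\in\Nor(A,a)$, and therefore $v\in\mathcal{N}(A,a)$, proving \eqref{N=N_R}.

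The only delicate point is the passage through the closure in the description of $S^{d-1}\cap\Tan(A,a)$: one must ensure that the inequality, which holds for the explicit normalized secants $\frac{x-a}{|x-a|}$, survives the limit defining the tangent directions. This is routine because $\langle v,\cdot\rangle$ is continuous and the upper bound $\frac{|x-a|}{2R}$ vanishes as $x\to a$, so the limiting inequality is in fact the sharp one $\langle v,w\rangle\le 0$; no genuine obstacle arises.
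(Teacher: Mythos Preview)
Your proof is correct and follows essentially the same approach as the paper's own proof: both use the supporting inequality \eqref{4*}, divide by $|b-a|$, and pass to the limit along sequences $z_n\in A$ with $z_n\to a$ to conclude $\langle v,w\rangle\le 0$ for every tangent direction $w$. Your version is slightly more detailed in justifying the limit passage via the description of $S^{d-1}\cap\Tan(A,a)$ and in extending from unit vectors to the whole tangent cone by homogeneity, but the argument is the same.
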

\begin{proof} If $\mathcal{N}_R(A,a)\neq \emptyset$,
let   $B(a+Rv)$ be $R$-supporting $A$ at $a$, then 
 $ A \subset (B(a+Rv))^c$. Let 
  $$w=\lim_{z_n \in A, z_n \to a } \frac{z_n-a}{|z_n-a|}\in \Tan(A,a);$$
  by \eqref{4*}, with $b=z_n$  the inequalities
 $$\langle v, \frac{z_n-a}{|z_n-a|} \rangle \leq \frac{|z_n-a|}{2R}$$
 hold, $\forall n \in \N.$ Thus  the inequality
 $ \langle v,w \rangle \leq 0$ holds. Then
 $v \in \Nor(A,a).$
\end{proof}

\begin{theorem}\label{N=NRequality} Let $A$ be a body, $a \in \pa A$. If $reach(A,a)\geq R$ , then 
$$\mathcal{N}(A,a) = \mathcal{N}_R(A,a).$$
\end{theorem}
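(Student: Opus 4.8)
The inclusion $\mathcal{N}_R(A,a) \subseteq \mathcal{N}(A,a)$ is already furnished by Lemma \ref{lemmainclusionnormalsets}, so the plan is to establish the reverse inclusion $\mathcal{N}(A,a) \subseteq \mathcal{N}_R(A,a)$ under the hypothesis $reach(A,a) \geq R$. I would fix an arbitrary $v \in \mathcal{N}(A,a) = \Nor(A,a) \cap S^{d-1}$ (if there is none the inclusion is vacuous). The goal is to show that the open ball $B(a+Rv)$ of radius $R$ centered at $a+Rv$ does not meet $A$, i.e. $A \subset (B(a+Rv))^c$, which is exactly the statement $v \in \mathcal{N}_R(A,a)$ by Definition \ref{defRsupporting}.

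The first step is to convert membership in the normal cone into a nearest-point statement, using Federer's facts collected in Proposition \ref{prop2.1federer}. Choosing any radius $r$ with $0 < r < R$, the hypothesis gives $reach(A,a) > r$, so part iii) applies and identifies $\Nor(A,a)$ with the set of nonnegative multiples of vectors $w$ of length $r$ for which $\xi_A(a+w)=a$. Since $v$ is a unit vector in $\Nor(A,a)$, this forces $\xi_A(a+rv)=a$. I would then set $x := a + rv$, note that $x \notin A$, that $x\in Unp(A)$, and that $\xi_A(x)=a \in \pa A$, and feed this into part ii) with $\rho := reach(A,a) \geq R$ and $\frac{x-a}{|x-a|} = v$; this yields the larger support ball of radius $\rho$, namely $A \subset (B(a+\rho v, \rho))^c$.

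It then remains to compare the two tangent balls. Both $B(a+Rv)$ and $B(a+\rho v,\rho)$ have $a$ on their boundary and have centers on the ray $\{a + tv : t>0\}$, so the smaller one should sit inside the larger. Concretely, writing $z = y-a$ for a point $y \in B(a+Rv)$, the inequality $|z-Rv|^2 < R^2$ rewrites as $|z|^2 < 2R\langle z,v\rangle$, which in particular forces $\langle z,v\rangle>0$; since $\rho \ge R$ this gives $|z|^2 < 2\rho\langle z,v\rangle$, i.e. $|z-\rho v|^2<\rho^2$, so $y \in B(a+\rho v,\rho)$. Hence $B(a+Rv) \subseteq B(a+\rho v,\rho)$, so $A \cap B(a+Rv) \subseteq A \cap B(a+\rho v,\rho) = \emptyset$, giving $v \in \mathcal{N}_R(A,a)$ and completing the argument.

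I expect the only delicate point to be the careful use of Proposition \ref{prop2.1federer} iii): it requires the strict inequality $reach(A,a) > r$, which is precisely why one works with an auxiliary radius $r<R$ rather than with $R$ itself, and one must correctly read off that the unit normal $v$ corresponds to the vector $rv$ in the cited parametrization. The remaining ball-nesting computation is routine, but the orientation of the supporting direction $v$ must be tracked consistently with the conventions in Definition \ref{defRsupporting} and in Proposition \ref{prop2.1federer} ii).
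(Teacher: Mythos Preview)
Your proof is correct and follows essentially the same route as the paper: both invoke Lemma~\ref{lemmainclusionnormalsets} for one inclusion and, for the other, use Proposition~\ref{prop2.1federer}\,iii) with an auxiliary radius $r<R$ to obtain $\xi_A(a+rv)=a$ for $v\in\mathcal{N}(A,a)$. The only cosmetic difference is in the passage from $r$ to $R$: the paper writes down inequality~\eqref{4*} at level $r$ and lets $r\to R^-$, while you instead invoke part~ii) to get the support ball of radius $\rho=reach(A,a)\ge R$ and nest $B(a+Rv)\subseteq B(a+\rho v,\rho)$ (just note that part~ii) as stated presumes $\rho$ finite, so the trivial case $reach(A,a)=+\infty$ deserves a separate word).
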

\begin{proof}Let $w\in  \Nor(A,a)$, $0 < r < reach(A,a)$; then by iii) of Proposition \ref{prop2.1federer}, 
$$\xi_A(a+rw)=a .$$
Therefore, by ii) of Proposition \ref{prop2.1federer},  $B_r(a+rw)$ is $r$-supporting $A$ at $a$; then by \eqref{4*} for all $ z\in A\setminus\{a\}$ the inequalities
  \begin{equation}\label{ineqRsupporting}
   \langle w, \frac{z-a}{|z-a|} \rangle \leq \frac{|z-a|}{2r}
  \end{equation}
hold.
By a limit argument for $r\to R^-$ then \eqref{ineqRsupporting} holds for $r=R$ 
and the unit vector $w$ is $R$-supporting $A$ at $a$, so $w \in \mathcal{N}_R(A,a)$. This proves that 
$$\mathcal{N}(A,a) \subseteq \mathcal{N}_R(A,a).$$
The opposite inclusion follows from \eqref{N=N_R}.
\end{proof}

 \begin{remark}\label{reach>=RimpliesRbodies} It was noticed in \cite[Corollary 4.7]{Colman} and proved in \cite[Proposition 1]{Cue} that, when the R-hull exists,  it coincides with the R-hulloid. If $A$ has $reach$ greater or equal than $R$, then  (see Remark \ref{rhullandrhulloid}) $A$ has $R$-hull, which coincides with $A$ and with its $R$-hulloid, and $A$ is a $R$-body; then the class of $R$-bodies contains the class of sets of reach greater or equal than $R$.
The family of the sets of reach greater or equal than $R$ is not closed with respect to the intersection.
\end{remark}

\begin{remark}\label{minimalfamily}Let $\mathcal{R} $ be the family of the sets of reach $ R. $
The family of the $R$-bodies is the minimal
family containing $\mathcal{R} $ and closed with respect to the
intersection.
\end{remark}
\begin{proof}
Let  $\mathcal{F} $ be a family closed with respect to the intersection, such that $\mathcal{F} \supset \mathcal{R} $.
Let $ A $ be a $R$-body, then $ A = \cap \{B^c : B \cap A =
\emptyset\}$. As $ reach (B^c) = R$ for every $B^c \supset A$, then  $A$ is 
intersection of sets of reach $ R $, which are in the family.  Then $A \in \mathcal{F}$. 
\end{proof}

Next theorem gives  necessary and sufficient conditions for a $R$-supported body to have reach greater or equal than $R$.

  \begin{theorem}\label{RbodyfrontieraPaolo}Let $d\geq 2$, $A$ be a body in $\RR^d$. Then the following properties are equivalent:
   \item[i)] $A$ is a $R$-supported body and 
   \begin{equation}\label{N(A,a)subsetN_R(A,a)}
   \Nor(A,a)\cap S^{d-1}= \mathcal{N}_R(A,a) \quad\forall a \in \pa A;
   \end{equation}
   
   \item[ii)] $reach(A)\geq R.$
   \end{theorem}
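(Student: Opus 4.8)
The plan is to prove the two implications separately. The forward direction (ii)$\Rightarrow$(i) is essentially bookkeeping with results already established, while the reverse direction (i)$\Rightarrow$(ii) carries the real geometric content and is where I expect the only difficulty.

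For (ii)$\Rightarrow$(i), assume $reach(A)\geq R$. By Remark \ref{reach>=RimpliesRbodies} the set $A$ is then a $R$-body, hence by Remark \ref{viceversaEggleston} a $R$-supported body, so $\mathcal{N}_R(A,a)\neq\emptyset$ at every boundary point. Moreover $reach(A)=\inf_a reach(A,a)\geq R$ forces $reach(A,a)\geq R$ for every $a\in A$, so Theorem \ref{N=NRequality} applied at each $a\in\pa A$ gives $\Nor(A,a)\cap S^{d-1}=\mathcal{N}_R(A,a)$, which is exactly \eqref{N(A,a)subsetN_R(A,a)}. This yields (i).

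For the substantive direction (i)$\Rightarrow$(ii) I would argue by contradiction from the definition of reach through unique nearest points. Suppose $reach(A)<R$. Then some $a_0\in A$ has $reach(A,a_0)<R$, and choosing $\rho$ with $reach(A,a_0)<\rho<R$ produces $x\in B(a_0,\rho)\setminus Unp(A)$. Since $A$ is closed and nonempty, $x$ has a nearest point, hence (as $x\notin Unp(A)$) two distinct nearest points $a_1\neq a_2$ with $|x-a_1|=|x-a_2|=\dist(x,A)=:\delta$ and $0<\delta\leq|x-a_0|<R$ (here $\delta>0$ because $x\notin A$, and the $a_i$ lie on $\pa A$ since moving from an interior nearest point toward $x$ would decrease the distance). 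Writing $v_i=(x-a_i)/\delta\in S^{d-1}$, the identity $\dist(a_i+\delta v_i,A)=\delta=|\delta v_i|$ shows $\delta v_i\in Q^{(a_i)}$, so by Proposition \ref{prop2.1federer} i) and the cone property $v_i\in\Nor(A,a_i)\cap S^{d-1}$. Invoking hypothesis \eqref{N(A,a)subsetN_R(A,a)} at the boundary points $a_i$ upgrades this to $v_i\in\mathcal{N}_R(A,a_i)$, i.e. the open ball $B(a_i+Rv_i)$ is disjoint from $A$.

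The contradiction then comes from a single inner-product computation. Since $a_1\in A$ lies outside $B(a_2+Rv_2)$ we have $|a_1-(a_2+Rv_2)|\geq R$. Substituting $a_1=x-\delta v_1$ and $a_2+Rv_2=x+(R-\delta)v_2$ gives
$$|a_1-(a_2+Rv_2)|^2=R^2+2\delta(R-\delta)\big(\langle v_1,v_2\rangle-1\big).$$
As $\delta>0$ and $R-\delta>0$, the inequality $\geq R^2$ forces $\langle v_1,v_2\rangle\geq 1$, hence $v_1=v_2$ and therefore $a_1=x-\delta v_1=x-\delta v_2=a_2$, contradicting $a_1\neq a_2$. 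Thus $reach(A)\geq R$. The hard part is the reverse implication, and its crux is the observation that the nearest-point directions $v_i$ are \emph{automatically} normal unit vectors (through $Q^{(a)}\subset\Nor(A,a)$), which is precisely what lets hypothesis (i) promote them to genuine $R$-supporting directions; once both balls $B(a_i+Rv_i)$ are known to avoid $A$, the geometry of two radius-$R$ balls sharing the common point $x$ collapses the two nearest points into one. In the easy direction I anticipate no obstacle beyond citing Theorem \ref{N=NRequality} at the correct points.
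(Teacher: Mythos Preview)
Your proof is correct and follows essentially the same approach as the paper: both directions are handled the same way, with (ii)$\Rightarrow$(i) coming from Remark \ref{reach>=RimpliesRbodies} and Theorem \ref{N=NRequality}, and (i)$\Rightarrow$(ii) proved by contradiction via a point $x\notin Unp(A)$ with two distinct nearest points whose displacement vectors land in $Q^{(a)}\subset\Nor(A,a)$ and are then promoted by hypothesis to $R$-supporting directions. The only difference is cosmetic: in the final step the paper works with a single supporting ball $B(a+R\theta)$ and uses the containment $D(x,R_1)\setminus\{a\}\subset B(a+R\theta)$ to trap the second nearest point, whereas you use both supporting balls symmetrically and finish with the inner-product computation forcing $v_1=v_2$.
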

  \begin{proof}If $reach(A)\geq R$, from Theorem \ref{N=NRequality}  equality holds in \eqref{N=N_R}; from Remark  \ref{reach>=RimpliesRbodies},  $A$ is a $R$-body, then it is a $R$-supported body. Then ii) implies i).
  
 By contradiction let us assume that  i) holds and $reach(A)< R$. Then there exists $a \in \pa A$, $x\in B(a)\cap A^c$, $x\not \in Unp(A)$. Thus there exists $a_1\in \pa A$, $a_1\neq a$ satisfying:
  $$\dist(x,A)=|x-a|=|x-a_1|=R_1 < R.$$
  Let $v=x-a$, then 
  $$\dist(a+v,A)=\dist(x,A)=|x-a|=|v|,$$
  so $v \in Q^{(a)}$. By i) of Proposition \ref{prop2.1federer}, $v\in \Nor(A,a)$. Then  $\theta=v/|v|$ is a unit vector  $R$-supporting $A$ at $a$; that is $\theta\in \mathcal{N}_R(A,a)$, thus $A\subset (B(a+R\theta))^c$. Since $x-a=R_1\theta $,  
  $R_1< R$
  and $\theta$ is the inner unit normal to $B(a+R\theta)$ and to $B(a+R_1\theta)$ at $a$, then $B(a+R_1\theta,R_1 )\subset B(a+R\theta)$.
  Then $a_1\in A\cap B(a+R\theta)$, so $\theta\not \in  \mathcal{N}_R(A,a)$. Contradiction.
  \end{proof}
 
\begin{example}\label{examplenotreach}  The following sets are examples of $R$-bodies where strict inclusion holds in \eqref{N(A,a)subsetN_R(A,a)}; then,  they have reach less than $R$: 
\begin{itemize}
\item[a)] $A=\{a,b\}$, with $|a-b| <R$;
\item[b)]  $H\setminus B(o,r)$, with $r< R$, $H$ plane in $R^3$, through $o$  
 (see Proposition \ref{RbodiesinH});
\item[c)]  in \cite[fig.1(a)]{Cue} there is  a $R$-body with non empty interior and strict inequality holds in  \eqref{N(A,a)subsetN_R(A,a)}.
\end{itemize}
\end{example}

  \begin{lemma}\label{lemmaWalther}   
  Let $A$ and $\overline{A^c}$ be $R$-supported bodies. If:
 \begin{equation}\label{paAinpacclosure1}
 a_0 \in \pa A \cap \pa \overline{A^c},
 \end{equation}
 then there exist two open balls $B(y_0),\, B(y_1)$ satisfying:
 \item[i)]$B(y_0) \subset A^c,B(y_1)\subset (\overline{A^c})^c $;
 \item[ii)] $B(y_0)\cap B(y_1)=\emptyset, \{a_0\}=\pa B(y_0)\cap \pa B(y_1) .$
   \end{lemma}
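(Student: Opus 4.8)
The goal is to produce, at a boundary point $a_0$ shared by $A$ and $\overline{A^c}$, two internally tangent $R$-balls, one sitting in $A^c$ and one sitting in the complement of $\overline{A^c}$, touching only at $a_0$. My plan is to exploit the $R$-supported hypothesis on both $A$ and $\overline{A^c}$ to extract an $R$-supporting unit vector at $a_0$ from each side, and then to show these two vectors are antipodal, which forces the two balls to be internally tangent at $a_0$ with disjoint interiors.

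First I would apply the definition of $R$-supported body to $A$ at $a_0 \in \pa A$: since $\mathcal{N}_R(A,a_0)\neq\emptyset$, pick $v_0 \in \mathcal{N}_R(A,a_0)$ and set $y_0 = a_0 + R v_0$, so that $B(y_0)$ is $R$-supporting $A$ at $a_0$, giving $A \subset (B(y_0))^c$, i.e.\ $B(y_0)\subset A^c$ with $a_0 \in \pa B(y_0)$. Symmetrically, since $\overline{A^c}$ is an $R$-supported body and $a_0 \in \pa \overline{A^c}$, pick $v_1 \in \mathcal{N}_R(\overline{A^c},a_0)$ and set $y_1 = a_0 + R v_1$, so $B(y_1)\subset (\overline{A^c})^c$ with $a_0\in\pa B(y_1)$. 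This already yields statement i) and the fact that $a_0 \in \pa B(y_0)\cap\pa B(y_1)$.

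The crux is statement ii): that the two balls have disjoint interiors and meet exactly at $a_0$, which amounts to showing $v_1 = -v_0$. Here is where I expect the main obstacle. From $B(y_0)\subset A^c$ I get $B(y_0)\cap A=\emptyset$, hence $B(y_0)$ is an open set contained in $A^c$; since $B(y_0)$ is open it lies in $\inte(A^c) = (\overline{A^c})^c$ away from $\pa A$, but I must relate this to $\overline{A^c}$. The key geometric observation is that $B(y_0)$, being an open ball in $A^c$ tangent to $\pa A$ at $a_0$, is contained in $\overline{A^c}$; similarly $B(y_1)\subset (\overline{A^c})^c$ forces its interior into $A$ near $a_0$. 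Using the characterization \eqref{4*} of $R$-supporting vectors applied to both $A$ and $\overline{A^c}$, together with the fact that at $a_0$ the tangent directions of $A$ and of $\overline{A^c}$ are opposite, I would deduce that $v_1$ must be $R$-supporting from the opposite side, i.e.\ $\langle v_0, w\rangle \le 0$ and $\langle v_1, w\rangle \le 0$ for tangent directions $w$ of the two complementary sets, pinning $v_1=-v_0$ by a separation/antipodality argument.

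Once $v_1=-v_0$ is established, the two centers satisfy $y_1 - y_0 = R(v_1 - v_0) = -2R v_0$, so $|y_0 - y_1| = 2R$, which is exactly the condition for two open balls of radius $R$ to be internally tangent with disjoint interiors meeting at the single point $a_0 = y_0 - Rv_0 = y_1 - Rv_1$. This gives $B(y_0)\cap B(y_1)=\emptyset$ and $\{a_0\}=\pa B(y_0)\cap\pa B(y_1)$, completing ii). The delicate point throughout is justifying that the supporting ball on one side genuinely lies in the closed complement associated with the other side near $a_0$; I would handle this by a careful local argument near $a_0$, using that $a_0\in\pa A\cap\pa\overline{A^c}$ means every neighborhood of $a_0$ meets both $\inte(A)$ (if nonempty) and $A^c$, so neither supporting ball can cross to the wrong side without contradicting the supporting inequalities.
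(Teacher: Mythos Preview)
Your argument for i) is correct and matches the paper. The problem is with ii): you are making it far harder than necessary, and the route you propose has a genuine gap.

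The paper's argument for ii) is a one-line set-theoretic observation. Since $A$ is closed, $(\overline{A^c})^c=\inte(A)$. Hence $B(y_1)\subset(\overline{A^c})^c=\inte(A)\subset A$, while $B(y_0)\subset A^c$; therefore $B(y_0)\cap B(y_1)\subset A\cap A^c=\emptyset$. Once the two open balls of radius $R$ are disjoint, the fact that $a_0\in\pa B(y_0)\cap\pa B(y_1)$ forces $|y_0-y_1|=2R$ (the triangle inequality gives $|y_0-y_1|\le|y_0-a_0|+|a_0-y_1|=2R$, and disjointness gives $|y_0-y_1|\ge 2R$), and then $\pa B(y_0)\cap\pa B(y_1)=\{a_0\}$. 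So the antipodality $v_1=-v_0$ is a \emph{consequence} of the disjointness, not something to be proved first.

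Your proposed direct proof of $v_1=-v_0$ via tangent and normal cones does not close. From $v_0\in\mathcal{N}_R(A,a_0)\subset\Nor(A,a_0)$ and $v_1\in\mathcal{N}_R(\overline{A^c},a_0)\subset\Nor(\overline{A^c},a_0)$, together with $\Tan(A,a_0)\cup\Tan(\overline{A^c},a_0)=\RR^d$, one can only extract $\langle v_0,v_1\rangle\le 0$ (since $v_0\notin\Tan(A,a_0)$ forces $v_0\in\Tan(\overline{A^c},a_0)$, whence $\langle v_1,v_0\rangle\le 0$); this is far from $v_1=-v_0$, and nothing in the hypotheses forces the two tangent cones to be complementary half-spaces. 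Incidentally, the identity you wrote, $\inte(A^c)=(\overline{A^c})^c$, is false: since $A$ is closed, $A^c$ is already open, so $\inte(A^c)=A^c$; the correct identity is $\inte(A)=(\overline{A^c})^c$. You came very close to the right idea when you noted that $B(y_1)\subset(\overline{A^c})^c$ ``forces its interior into $A$'', but you then did not combine this with $B(y_0)\subset A^c$ to conclude disjointness directly.
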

 \begin{proof} For every $A\subset \RR^d$ it holds $\RR^d=int(A)\cup \pa A \cup A^c$,  $\overline{A^c}=\pa A \cup A^c$, and 
  \begin{equation}\label{int(A)=compl(cl(compl(A))}
  int(A)= (\overline{A^c})^c.
  \end{equation}
 As $a_0 \in \pa A$ and $A$ is a $R$-supported body,   there exists an open ball $B(y_0) \subset A^c$, where $a_0\in \pa B(y_0)$. By assumptions \eqref{paAinpacclosure1}, $a_0 \in  \pa \overline{A^c}$ too and  $  \overline{A^c}$ is a $R$-supported body, then 
 there exists an open ball $B(y_1) \subset (\overline{A^c})^c$, $a_0 \in \pa B(y_1)$ and i) is proved. 
 
Moreover  by \eqref{int(A)=compl(cl(compl(A))} the two open balls $B(y_0),B(y_1)$  also satisfy  ii).\end{proof}
 \begin{theorem}\label{RbodiesWalther} Let $A$ and $\overline{A^c}$ 
 be $R$-supported bodies  and 
 \begin{equation}\label{paAinpacclosure}
 \pa A = \pa \overline{A^c}
 \end{equation}
then,  $reach(A) \geq  R$.
   \end{theorem}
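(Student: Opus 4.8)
The plan is to derive $reach(A)\ge R$ from the characterization already established in Theorem \ref{RbodyfrontieraPaolo}. Since $A$ is assumed to be a $R$-supported body, it suffices to verify the normal-set equality \eqref{N(A,a)subsetN_R(A,a)}, i.e. $\Nor(A,a)\cap S^{d-1}=\mathcal{N}_R(A,a)$ for every $a\in\pa A$; here the inclusion $\mathcal{N}_R(A,a)\subseteq\Nor(A,a)\cap S^{d-1}$ is free from \eqref{N=N_R}, so only the reverse inclusion is at stake. In fact I expect to prove the sharper statement that at each boundary point this common set is a \emph{single} unit vector, which makes the equality automatic.

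First I would use the hypothesis $\pa A=\pa\overline{A^c}$. Because $(\overline{A^c})^c=\inte A$ by \eqref{int(A)=compl(cl(compl(A))}, this hypothesis guarantees that every $a\in\pa A$ also lies in $\pa A\cap\pa\overline{A^c}$, so Lemma \ref{lemmaWalther} is applicable at \emph{each} boundary point. It yields two open balls $B(y_0)\subset A^c$ and $B(y_1)\subset\inte A$ with $B(y_0)\cap B(y_1)=\emptyset$ and $\{a\}=\pa B(y_0)\cap\pa B(y_1)$. Two disjoint open balls of the common radius $R$ sharing a single boundary point are externally tangent there, so $|y_0-y_1|=2R$ and $a$ is the midpoint of $[y_0,y_1]$; writing $\nu:=(y_0-a)/R\in S^{d-1}$ gives $y_0=a+R\nu$ and $y_1=a-R\nu$. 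The outer-ball inclusion $B(a+R\nu)\subset A^c$ is exactly the statement $\nu\in\mathcal{N}_R(A,a)$ in \eqref{defvR-supporting}.

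The heart of the argument is to pin down $\Tan(A,a)$ by squeezing it between the two balls. On one side $\overline{B(y_1,R)}\subseteq A$ (as $A$ is closed and $B(y_1)\subset\inte A$), so monotonicity of the tangent cone gives $\Tan(A,a)\supseteq\Tan(\overline{B(y_1,R)},a)=\mathbf{H}^-$, the closed half-space $\{v:\langle v,\nu\rangle\le0\}$. On the other side $B(a+R\nu)\cap A=\emptyset$ forces, via the support inequality \eqref{4*} evaluated along any sequence $z_n\in A$ with $z_n\to a$, that every tangent direction $w$ satisfies $\langle w,\nu\rangle\le0$; hence $\Tan(A,a)\subseteq\mathbf{H}^-$. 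Thus $\Tan(A,a)=\mathbf{H}^-$, and taking the dual in \eqref{defNor} collapses the normal cone to the single ray $\{\lambda\nu:\lambda\ge0\}$, so $\Nor(A,a)\cap S^{d-1}=\{\nu\}$. Combined with $\nu\in\mathcal{N}_R(A,a)$ and \eqref{N=N_R} this gives $\mathcal{N}_R(A,a)=\{\nu\}=\Nor(A,a)\cap S^{d-1}$, which is \eqref{N(A,a)subsetN_R(A,a)}. Since this holds for every $a\in\pa A$ and $A$ is $R$-supported, part i) of Theorem \ref{RbodyfrontieraPaolo} is met, and its equivalence with ii) yields $reach(A)\ge R$.

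The step I expect to demand the most care is the two-sided control of the tangent cone. The lower bound is merely monotonicity together with the elementary tangent cone of a ball, but the upper bound is the genuine use of the outer rolling ball and must be run through the quadratic support inequality \eqref{4*} (equivalently, through the observation that $z_n\in A$ staying outside $B(a+R\nu)$ forces $\langle z_n-a,\nu\rangle\le|z_n-a|^2/2R$). The ensuing duality computation — that the polar of a closed half-space is a single ray — is routine, and verifying that the hypothesis $\pa A=\pa\overline{A^c}$ is precisely what makes Lemma \ref{lemmaWalther} available at all boundary points is the only place where that hypothesis is used.
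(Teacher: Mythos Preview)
Your proof is correct and takes a genuinely different route from the paper's. The paper argues by contradiction directly from the definition of reach: assuming $reach(A)<R$ produces $a_0\in\pa A$, a point $x_1\in A^c$ with $\dist(x_1,A)=r_1<R$, and a second nearest point $a_1\in\pa A\setminus\{a_0\}$; then the small ball $B(x_1,r_1)\subset A^c$, being disjoint from the inner ball $B(y_1)$ supplied by Lemma~\ref{lemmaWalther} and sharing the boundary point $a_0$ with it, must be externally tangent to $B(y_1)$ at $a_0$ and hence contained in $B(y_0)\subset A^c$, forcing $a_1\in A^c$, a contradiction. You instead route everything through the characterization of Theorem~\ref{RbodyfrontieraPaolo} and verify its hypothesis by proving something sharper: the inner and outer rolling balls squeeze $\Tan(A,a)$ into an exact half-space, so $\Nor(A,a)\cap S^{d-1}$ is the singleton $\{\nu\}$, which already lies in $\mathcal{N}_R(A,a)$. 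Your approach yields extra structural information (the tangent cone is explicitly a half-space at every boundary point, hence $\pa A$ is $C^1$ in the appropriate sense), while the paper's argument is more direct and self-contained, not leaning on the earlier equivalence theorem.
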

 \begin{proof} Let us assume, by contradiction that $reach(A) < R$. Then there exists $ a_0\in \pa A$, so that $reach(A,a_0) < R$. Then $\exists \, r_1 <R, \exists \, x_1 \in A^c $ and $a_1\in \pa A \setminus \{a_0\}$ satisfying:
 $$r_1=\dist(x_1,A)=|x_1-a_0|=|x_1-a_1|< R, $$
 \begin{equation}\label{B(x1,r1)inAc}
  B(x_1,r_1) \subset A^c.
 \end{equation} By assumption \eqref{paAinpacclosure}, $a_0\in \pa \overline{A^c}$ and 
 by Lemma \ref{lemmaWalther} there exist two open balls $B(y_0), B(y_1)$ satisfying: i),\, ii).
 Since $a_0\in \pa B(x_1,r_1)$ and inclusion \eqref{B(x1,r1)inAc} holds  then,  $B(x_1,r_1) \subset B(y_0,R)$ and
 $$a_1\in \pa  B(x_1,r_1) \setminus \{a_0\}\subset B(y_0,R)\subset A^c.$$
So $a_1\in A^c $, in contradiction with $a_1 \in \pa A$.  \end{proof}
 \begin{remark} Let $A$ be a body, $X=A^c$; as $\pa \overline{X} \subset \pa X$, then: 
 $$ \pa\overline{A^c} \subseteq \pa A^c=\pa A.$$
Last inclusion can  be strict, see also next example.
 \end{remark}
 \begin{example} If \eqref{paAinpacclosure} does not hold, then 
 it can be $reach(A) < R$. As example let  $|x_0-x_1| > 3R, |x_1-x_2| < R/4 $ and let
 $$A= D(x_0) \cup \{x_1\}\cup \{x_2\}.$$ 
 Then $A$ is a $R$-body and  
 $\overline{A^c}$ is a $R$-body too. But  $reach(A) =|x_1-x_2|/2 < R$.
 \end{example}
 
 \begin{remark}  Walther (\cite{Wal})  considered the class of non empty, path connected, compact sets $A$,  with the following property: a ball of radius $R$ rolls freely in $A$ and in $ \overline{A^c}$. These path connected compact sets $A$ are such that  $A$ and $\overline{A^c}$ 
 are $R$-bodies (\cite[Theorem 1]{Wal}). 
 \end{remark}
  Walther proved  several other properties of the boundary of these  sets  $A$:

 \begin{proposition}\cite[formula (33)]{Wal} Let $A$ and $\overline{A^c}$ 
 be $R$-bodies  and $A$ path connected compact set, then \eqref{paAinpacclosure} holds.
  \end{proposition}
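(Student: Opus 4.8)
The plan is to prove the two inclusions of \eqref{paAinpacclosure} separately. The inclusion $\pa\overline{A^c}\subseteq\pa A$ is the general topological fact recorded in the Remark just above (valid for any body, taking $X=A^c$), so the whole content lies in the reverse inclusion $\pa A\subseteq\pa\overline{A^c}$. First I would rewrite the target in a usable form. Using $\inte A=(\overline{A^c})^c$ from \eqref{int(A)=compl(cl(compl(A))}, one has $\pa\overline{A^c}=\overline{A^c}\cap\cl(\inte A)=\pa A\cap\cl(\inte A)$, the last equality because $\cl(\inte A)\subseteq A$ and $\overline{A^c}\cap A=\pa A$. Hence \eqref{paAinpacclosure} is equivalent to $\pa A\subseteq\cl(\inte A)$: \emph{every boundary point of $A$ must be a limit of interior points of $A$}.

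Second, I would turn this into a connectedness statement. Set $G:=A\cap\cl(\inte A)$ and $N:=A\setminus G=A\cap\inte\overline{A^c}$. Then $G$ is relatively closed in $A$, $N$ is relatively open in $A$, $N\subseteq\pa A$, and the goal $\pa A\subseteq\cl(\inte A)$ is exactly $N=\emptyset$. Since $A$ is path connected, hence connected, it suffices to show that $G$ is \emph{also} relatively open, together with $G\neq\emptyset$: then $\{G,N\}$ is a partition of $A$ into relatively clopen sets, and connectedness forces $N=\emptyset$. Here $G\neq\emptyset$ requires $\inte A\neq\emptyset$, which in Walther's setting is guaranteed by the hypothesis that a ball of radius $R$ rolls freely inside $A$; note that for a degenerate body such as a single point, where $\inte A=\emptyset$ and $\overline{A^c}=\RR^d$, the conclusion genuinely fails, so some such nondegeneracy is indispensable.

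Third, the heart of the matter is the relative openness of $G$, i.e. that no sequence $a_n\in N$ of ``exposed'' boundary points can converge to a point $a^*\in G\cap\pa A$. Suppose it does. Since $\overline{A^c}$ is an $R$-body, $\inte A=(\overline{A^c})^c$ is a union of open balls of radius $R$; as $a^*\in\cl(\inte A)$, I would take interior points $p_k\to a^*$, enclose each in a ball $B(c_k,R)\subseteq\inte A$, and pass to a convergent subsequence $c_k\to c$. The limit argument for open balls gives $B(c,R)\subseteq\inte A$, and $a^*\notin\inte A$ forces $a^*\in\pa B(c,R)$: an inner tangent $R$-ball at $a^*$. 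Symmetrically, $A$ is an $R$-body, hence $R$-supported by Remark \ref{viceversaEggleston}, so there is an outer support ball $B(a^*+R\theta)\subseteq A^c$. Two open balls of radius $R$ with disjoint interiors (since $\inte A\cap A^c=\emptyset$) both passing through $a^*$ have centres at mutual distance $\ge 2R$ while each is at distance $R$ from $a^*$, so they are tangent and antipodal; thus the inner and outer tangent balls at $a^*$ are \emph{unique}, and $\pa A$ is pinched between them near $a^*$. Finally I would invoke the $R$-body property of $A$ itself to exclude the tentacle: covering $A^c$-points $q_n\to a^*$ chosen near the exposed points $a_n$ by balls in $A^c$ yields, in the limit, an outer tangent ball at $a^*$, which by uniqueness must coincide with the forced one — forcing the $a_n$ to lie in $\cl(\inte A)$ (the same concave-corner obstruction that makes a disk-with-whisker fail to be an $R$-body), contradicting $a_n\in N$.

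The main obstacle is precisely this last step: making the ``no exposed tentacle'' argument rigorous. The uniqueness of the tangent balls gives sharp local control of $\pa A$ at $a^*\in G$, but one must combine \emph{both} $R$-body hypotheses — inner balls filling $\inte A$ (from ``$\overline{A^c}$ is an $R$-body'') and outer balls filling $A^c$ (from ``$A$ is an $R$-body'') — with path connectedness to upgrade this local control to the global conclusion $N=\emptyset$. This coupling is the technical core of Walther's analysis behind \cite[formula (33)]{Wal}; by contrast, the reductions in the first two paragraphs are routine.
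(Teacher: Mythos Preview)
The paper does not prove this proposition: it is quoted from \cite[formula (33)]{Wal} and used as a black box to derive Theorem~\ref{CorWal}, so there is no in-paper argument to compare your sketch against. Your reduction is correct and clean: $\pa\overline{A^c}=\pa A\cap\cl(\inte A)$, so the content is $\pa A\subseteq\cl(\inte A)$; the connectedness partition $A=G\cup N$ with $G=A\cap\cl(\inte A)$, $N=A\cap\inte\overline{A^c}$ is a natural set-up; and you are right that $\inte A\neq\emptyset$ must be assumed, since a singleton $A=\{p\}$ satisfies every hypothesis literally stated yet has $\pa A=\{p\}\neq\emptyset=\pa\overline{A^c}$. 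The proposition tacitly imports Walther's standing assumption that an $R$-ball rolls inside $A$.

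The gap is exactly where you place it, and it is a real one. The sandwich of $a^*\in G\cap\pa A$ between unique antipodal tangent $R$-balls $B^-\subseteq\inte A$ and $B^+\subseteq A^c$ is correct, but your closing sentence is a non-sequitur: that limiting outer balls at the $a_n$ must coincide with $B^+$ says nothing about $\inte A$ near $a_n$, and the inner ball $B^-$ only yields $\dist(a_n,\inte A)\to 0$, which is automatic from $a_n\to a^*\in\cl(\inte A)$ and does not contradict $a_n\in\inte\overline{A^c}$ (the radius of the excluding neighbourhood may shrink). The disk-with-whisker heuristic is suggestive but is not an argument: to turn it into one you must show that a point of $A^c$ lying strictly between the whisker $\{a_n\}$ and $B^-$ cannot be covered by an $R$-ball in $A^c$, and this requires control of $A$ --- not merely of $\pa A$ --- throughout the cusp, which you have not established. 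In Walther's original the equality $\pa A=\pa\overline{A^c}$ is part of a structure theorem for $\pa A$ under the two-sided rolling condition; your outline correctly isolates the difficulty but does not overcome it.
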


 From last proposition and Theorem \ref{RbodiesWalther}, for  the Walther's $R$-rolling sets    a property, not stated in \cite{Wal}, can be  proved.
 \begin{theorem}\label{CorWal} Let $A$ and $\overline{A^c}$ 
 be $R$-bodies  and let $A$ be  a path connected compact set, then 
 $$reach(A) \geq R.$$ 
 \end{theorem}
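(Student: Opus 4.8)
The plan is to recognize Theorem \ref{CorWal} as an immediate corollary obtained by chaining two results already available in the excerpt: the Walther Proposition stated just before it (\cite[formula (33)]{Wal}) and Theorem \ref{RbodiesWalther}. The only content of the argument is to verify that the hypotheses of the latter are met, since once they are, its conclusion $reach(A)\geq R$ is exactly what we want.

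First I would pass from ``$R$-body'' to ``$R$-supported body''. By Remark \ref{viceversaEggleston}, which follows from Proposition \ref{corollary le matematiche}, every $R$-body $A$ has $\mathcal{N}_R(A,a)\neq\emptyset$ for all $a\in\pa A$, hence is an $R$-supported body. Applying this to the two bodies $A$ and $\overline{A^c}$, which are $R$-bodies by hypothesis, shows that both $A$ and $\overline{A^c}$ are $R$-supported bodies. Next I would invoke the Walther Proposition \cite[formula (33)]{Wal}: since $A$ and $\overline{A^c}$ are $R$-bodies and $A$ is a path connected compact set, the boundary identity $\pa A=\pa\overline{A^c}$ holds. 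This is the only place where the topological hypotheses (path connectedness and compactness) are used.

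At this point all assumptions of Theorem \ref{RbodiesWalther} are in force: $A$ and $\overline{A^c}$ are $R$-supported bodies satisfying $\pa A=\pa\overline{A^c}$. Therefore that theorem applies verbatim and yields $reach(A)\geq R$, completing the proof.

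I do not expect a genuine obstacle here, because the substantive work has already been carried out upstream: the delicate step — deducing a lower bound on the reach from the coincidence of the two boundaries together with the existence of $R$-supporting balls on each side — is precisely the content of Theorem \ref{RbodiesWalther}, whose proof exploits the two opposite supporting balls $B(y_0)\subset A^c$ and $B(y_1)\subset(\overline{A^c})^c$ furnished by Lemma \ref{lemmaWalther}. The present statement merely repackages Walther's class of freely rolling sets so that these hypotheses are available, and then records the reach consequence that, as the authors note, was not stated in \cite{Wal}.
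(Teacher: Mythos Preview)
Your proposal is correct and follows exactly the approach indicated in the paper: the authors state explicitly, just before the theorem, that it follows ``from last proposition and Theorem \ref{RbodiesWalther}'', which is precisely your chain of inferences (Walther's Proposition gives $\pa A=\pa\overline{A^c}$, $R$-bodies are $R$-supported, and then Theorem \ref{RbodiesWalther} applies). There is nothing to add.
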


 \section{$R$-cones}\label{section R-cones}
Let us introduce  a class of  $R$-bodies, that will be called $R$-cones.
 As the $R$-bodies are a generalization of convex sets, the $R$-cones are a generalization of convex cones.

\begin{definition}\label{curvedconedef}
 Let $\mathcal{K}$ be a body in  $S^{d-1}$. A {\em  $R$-cone} with vertex $o$   (more simply an $R$-cone)
  is the  $R$-body: 
 \begin{equation}
  \label{defunboundcone}
   C_{\mathcal{K}}:=\bigcap_{v \in \mathcal{K}} (B(Rv,R))^c.
   \end{equation}

Let  $x\in \RR^d$, an   $R$-cone with  vertex $x$ is the $R$-body:
 \begin{equation}\label{defunboundedcurvedcone}
  C_{\mathcal{K}}^x:=x+ C_{\mathcal{K}}=\bigcap_{ v\in \mathcal{K}} (B(x+Rv,R))^c.
 \end{equation}
  \end{definition}
  
  Let us notice that $ x\in \pa C_{\mathcal{K}}^x$ and the unit vectors in $\mathcal{K}$ are $R$-supporting  $C_{\mathcal{K}}^x$ at its vertex $x$, that is  
  \begin{equation}\label{KinNRK}
  \mathcal{K}\subset \mathcal{N}_R(C_{\mathcal{K}}^x,x).
  \end{equation}
Moreover
 \begin{equation}\label{K1inK2}
  \mathcal{K}_1\subset \mathcal{K}_2 \Rightarrow C_{\mathcal{K}_1}^x\supset  C_{\mathcal{K}_2}^x.
  \end{equation}
 \begin{theorem}\label{inviluppoR-conigeneral*}
Let $E$ be a $R$-supported  body. Then 
\begin{equation}\label{secondformulapag8}
\bigcap_{a\in \pa E}\mathcal{C}^a_{\mathcal{N}_R(E,a)}= (\pa E_R)_R'.
\end{equation}
\end{theorem}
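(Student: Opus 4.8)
The plan is to reduce the asserted identity to the single geometric fact
$$
\pa E_R=\{\,a+R\theta:\ a\in\pa E,\ \theta\in\mathcal{N}_R(E,a)\,\},
$$
which is precisely the parametrization of $\pa E_R$ already recorded inside the proof of Lemma~\ref{proprieta'supportedbodies}. Granting it, the theorem is a matter of unpacking definitions. Indeed, by \eqref{defunboundedcurvedcone} the $R$-cone with vertex $a$ is $\mathcal{C}^a_{\mathcal{N}_R(E,a)}=\bigcap_{\theta\in\mathcal{N}_R(E,a)}(B(a+R\theta))^c$, so that
$$
\bigcap_{a\in\pa E}\mathcal{C}^a_{\mathcal{N}_R(E,a)}=\bigcap\{(B(a+R\theta))^c:\ a\in\pa E,\ \theta\in\mathcal{N}_R(E,a)\}.
$$
On the other hand, \eqref{secondformulapag8*conapice} states $(\pa E_R)'_R=\bigcap\{(B(x))^c:\ \dist(x,E)=R\}$, while $\pa E_R=\{x:\dist(x,E)=R\}$. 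Hence the two intersections run over the very same family of balls exactly when the displayed parametrization holds, and the equality follows. The role of the hypothesis that $E$ be $R$-supported is to ensure that each $\mathcal{N}_R(E,a)$ is a nonempty closed subset of $S^{d-1}$, so that every $\mathcal{C}^a_{\mathcal{N}_R(E,a)}$ is a genuine $R$-cone in the sense of Definition~\ref{curvedconedef}.

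For completeness I would re-establish the parametrization by a nearest-point argument. For the inclusion $\subseteq$, I take $x$ with $\dist(x,E)=R$ and let $a\in E$ realize the distance, so $|x-a|=R$. Then $a\in\pa E$: were $a$ interior to $E$, moving from $a$ along the segment toward $x$ would produce a point of $E$ strictly closer to $x$ than $R$, a contradiction. Setting $\theta=(x-a)/R\in S^{d-1}$ one has $a+R\theta=x$, and since $\dist(x,E)=R$ the open ball $B(x)$ contains no point of $E$, i.e. $E\subset(B(a+R\theta))^c$; by \eqref{defvR-supporting} this says $\theta\in\mathcal{N}_R(E,a)$. For the reverse inclusion $\supseteq$, if $a\in\pa E$ and $\theta\in\mathcal{N}_R(E,a)$, then $B(a+R\theta)\cap E=\emptyset$ gives $\dist(a+R\theta,E)\geq R$, while $a\in E$ together with $|a+R\theta-a|=R$ gives $\dist(a+R\theta,E)\leq R$; hence $a+R\theta\in\pa E_R$.

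The computation is then routine, so the only genuine content is the parametrization of $\pa E_R$, and within it the one nontrivial point is that the foot $a$ of a point $x\in\pa E_R$ lies on $\pa E$ rather than in $\inte E$. I expect no serious obstacle beyond keeping track of the distinction between $\mathcal{N}_R(E,a)$ being empty and the corresponding cone degenerating to all of $\RR^d$; the $R$-supported hypothesis removes exactly this ambiguity and makes the statement well posed.
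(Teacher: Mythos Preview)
Your proposal is correct and follows the same line as the paper: both arguments reduce the identity to the equivalence ``$a\in\pa E$ and $\theta\in\mathcal{N}_R(E,a)$ if and only if $x=a+R\theta$ satisfies $\dist(x,E)=R$,'' and then invoke \eqref{secondformulapag8*conapice}. The only difference is that you spell out this equivalence via a nearest-point argument, whereas the paper simply asserts it; your added discussion of why the $R$-supported hypothesis is needed (to make each $\mathcal{C}^a_{\mathcal{N}_R(E,a)}$ a bona fide $R$-cone) is a nice clarification absent from the paper's proof.
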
  
\begin{proof}
By definition:
\begin{equation}\label{CN_R(E,a)}
\mathcal{C}^a_{\mathcal{N}_R(E,a)}=\bigcap_{ v \in \mathcal{N}_R(E,a)} (B(a+Rv))^c =\bigcap_{  (B(a+Rv))^c\supset E} B(a+Rv))^c.
\end{equation}
Let us notice that $a\in \pa E, v \in \mathcal{N}_R(E,a)$ if and only if:
$$x=a+Rv, \mbox{\quad satisfies \quad}\dist(x,E)=R.$$
Thus
\begin{equation}\label{intersectionCN_R(E,a)}
\bigcap_{a\in \pa E}\mathcal{C}^a_{\mathcal{N}_R(E,a)}= \bigcap_{x :\, \, \,  \dist(x,E)=R} (B(x))^c.
\end{equation}
From \eqref{secondformulapag8*conapice} of Theorem \ref{inviluppoR-conigeneral}
the formula \eqref{secondformulapag8} follows.
\end{proof}

From  \eqref{secondformulapag8} and   \eqref{formulacoR(E)} we have:

\begin{corollary}\label{formulainviluppoR-conigeneral}
Let E be a $R$-supported  body. Then:
\begin{equation}
co_R(E)= E_R\bigcap \Big(\bigcap_{a\in \pa E}\mathcal{C}^a_{\mathcal{N}_R(E,a)}\Big).
\end{equation}
\end{corollary}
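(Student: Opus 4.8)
The plan is to obtain the identity by a single substitution, since both of the two equalities being combined are already available to us. The starting point is the representation \eqref{formulacoR(E)}, proved in \cite{MLVRbodies}, which holds for an arbitrary body $E$ and reads
\begin{equation*}
co_R(E)=E_R\cap \big(\pa E_R\big)_R'.
\end{equation*}
This expresses the $R$-hulloid as the intersection of the outer parallel body $E_R$ with the complement-type set $(\pa E_R)_R'$. The only remaining task is to rewrite the factor $(\pa E_R)_R'$ in terms of $R$-cones.

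That rewriting is exactly the content of Theorem \ref{inviluppoR-conigeneral*}. Since $E$ is assumed to be a $R$-supported body, the set $\mathcal{N}_R(E,a)$ is non empty at every $a\in\pa E$, so each $R$-cone $\mathcal{C}^a_{\mathcal{N}_R(E,a)}$ is well defined, and the theorem gives
\begin{equation*}
\bigcap_{a\in \pa E}\mathcal{C}^a_{\mathcal{N}_R(E,a)}=\big(\pa E_R\big)_R'.
\end{equation*}
Substituting this into the displayed form of \eqref{formulacoR(E)} produces
\begin{equation*}
co_R(E)=E_R\cap\Big(\bigcap_{a\in \pa E}\mathcal{C}^a_{\mathcal{N}_R(E,a)}\Big),
\end{equation*}
which is precisely the asserted formula. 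Thus I would present the proof as the two-step chain: invoke \eqref{formulacoR(E)} for the first factorization, then replace $(\pa E_R)_R'$ using \eqref{secondformulapag8}.

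I do not expect any real obstacle here, because the substantive work has already been carried out upstream. The genuine content lies inside Theorem \ref{inviluppoR-conigeneral*}, whose proof rests on the bijective correspondence between pairs $(a,v)$ with $a\in\pa E$, $v\in\mathcal{N}_R(E,a)$ and points $x=a+Rv$ satisfying $\dist(x,E)=R$; this is what turns the indexed intersection of $R$-cones in \eqref{intersectionCN_R(E,a)} into the intersection \eqref{secondformulapag8*conapice} defining $(\pa E_R)_R'$. The one point worth flagging explicitly in the write-up is that the hypothesis that $E$ be $R$-supported is used solely to guarantee that Theorem \ref{inviluppoR-conigeneral*} applies, i.e. that every boundary point contributes a genuine $R$-cone; granting that, the corollary follows by pure substitution with no further estimate or construction needed.
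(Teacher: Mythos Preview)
Your proof is correct and follows exactly the paper's own argument: the paper simply states that the corollary follows from \eqref{secondformulapag8} and \eqref{formulacoR(E)}, which is precisely the two-step substitution you carry out. Your write-up merely makes explicit what the paper leaves to the reader.
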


\begin{corollary}\label{inviluppoR-coni}
Let $A$ be a  $R$-supported body. Then $A$ is a $R$-body if and only if:  
\begin{equation}\label{formulainviluppoR-coni}
A= A_R\bigcap\Big( \bigcap_{a\in \pa A}\mathcal{C}^a_{\mathcal{N}_R(A,a)}\Big).
\end{equation}
\end{corollary}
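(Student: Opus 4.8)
The plan is to reduce the statement to two facts already established in the excerpt: the explicit formula for the $R$-hulloid of a $R$-supported body (Corollary \ref{formulainviluppoR-conigeneral}) and the characterization of $R$-bodies as fixed points of the $R$-hulloid operation (Remark \ref{R-hull=R-hulloid}). The key observation is that the right-hand side of \eqref{formulainviluppoR-coni} is nothing but the expression for $co_R(A)$ supplied by Corollary \ref{formulainviluppoR-conigeneral}.

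First I would apply Corollary \ref{formulainviluppoR-conigeneral} with $E=A$. This is legitimate because $A$ is assumed to be a $R$-supported body, which is precisely the hypothesis of that corollary. It yields
$$co_R(A)= A_R\bigcap \Big(\bigcap_{a\in \pa A}\mathcal{C}^a_{\mathcal{N}_R(A,a)}\Big),$$
so that the right-hand side of \eqref{formulainviluppoR-coni} is exactly $co_R(A)$.

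Next I would invoke Remark \ref{R-hull=R-hulloid}, according to which a body $A$ is a $R$-body if and only if $A=co_R(A)$. Combining this equivalence with the identification of the right-hand side above, the identity \eqref{formulainviluppoR-coni} holds if and only if $A=co_R(A)$, i.e.\ if and only if $A$ is a $R$-body. Both implications are thereby obtained simultaneously, with no separate argument needed for each direction.

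I do not expect a genuine obstacle here: the entire content is carried by Corollary \ref{formulainviluppoR-conigeneral}. The single point deserving attention is that that corollary requires the body to be $R$-supported, and it is exactly this hypothesis that is imposed on $A$; were $A$ merely a body, the formula for $co_R(A)$ would have to be taken from the more general \eqref{formulacoR(E)} together with Theorem \ref{inviluppoR-conigeneral}, and the clean reduction of the second factor to $\bigcap_{a\in\pa A}\mathcal{C}^a_{\mathcal{N}_R(A,a)}$ would fail. Thus the $R$-supported assumption is precisely what makes the displayed representation available.
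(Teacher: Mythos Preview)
Your proposal is correct and follows essentially the same approach as the paper: identify the right-hand side of \eqref{formulainviluppoR-coni} with $co_R(A)$ via Corollary \ref{formulainviluppoR-conigeneral}, and then use the characterization $A$ is a $R$-body $\Leftrightarrow A=co_R(A)$. The only cosmetic difference is that the paper treats the two directions separately and, for the converse, cites the ingredients \eqref{secondformulapag8} and \eqref{formulacoR(E)} rather than Corollary \ref{formulainviluppoR-conigeneral} directly, but the content is identical.
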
 
\begin{proof} If $A$ is a $R$-body then $A=co_R(A)$ and \eqref{formulainviluppoR-coni} follows from Corollary \ref{formulainviluppoR-conigeneral} with $E=A$. Conversely if \eqref{formulainviluppoR-coni}
holds for a $R$-supported body $A$, then by  \eqref{secondformulapag8} and \eqref{formulacoR(E)}, with $A$ in place of $E$, it follows that  $A=co_R(A)$.
\end{proof}

\subsection{$R$-cones, Tangent cones and Normal cones}\label{subsectioncones}

For simplicity, let us denote
 $\mathcal{N}_R(C_{\mathcal{K}})=\mathcal{N}_R(C_{\mathcal{K}},o)$, ${N}(C_{\mathcal{K}})=\Nor(C_{\mathcal{K}},o)$  and $\Tan(C_{\mathcal{K}})=\Tan(C_{\mathcal{K}},o)$.
  \begin{lemma}\label{Tan(A)inC_N_R(A)}Let $A$ be a  body, $a\in \pa A$ then

 \begin{equation}\label{TaninCNR}
   \Tan(A,a) \subset C_{\mathcal{N}_R(A,a)}.
\end{equation} 
 \end{lemma}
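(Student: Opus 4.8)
The plan is to reduce membership in the $R$-cone $C_{\mathcal{N}_R(A,a)}=\bigcap_{v\in\mathcal{N}_R(A,a)}(B(Rv,R))^c$ to a single scalar inequality and then invoke the normal-cone inclusion already established. Fix $w\in\Tan(A,a)$; I must show that $w\notin B(Rv,R)$ for every $v\in\mathcal{N}_R(A,a)$, that is $|w-Rv|\ge R$. Since $|v|=1$, expanding gives $|w-Rv|^2-R^2=|w|^2-2R\langle v,w\rangle$, so the geometric condition $|w-Rv|\ge R$ is equivalent to the inequality $\langle v,w\rangle\le |w|^2/(2R)$.

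The key step is to bound $\langle v,w\rangle$ from above. By Lemma \ref{lemmainclusionnormalsets} one has $\mathcal{N}_R(A,a)\subseteq\Nor(A,a)\cap S^{d-1}$, so every $v\in\mathcal{N}_R(A,a)$ lies in $\Nor(A,a)$; by the defining property \eqref{defNor} of the normal cone this yields $\langle v,w\rangle\le 0$ for all $w\in\Tan(A,a)$. Since the right-hand side $|w|^2/(2R)$ is nonnegative, the chain $\langle v,w\rangle\le 0\le |w|^2/(2R)$ delivers exactly the inequality obtained above, hence $|w-Rv|\ge R$. As $v\in\mathcal{N}_R(A,a)$ was arbitrary, $w$ avoids every ball $B(Rv,R)$, i.e. $w\in C_{\mathcal{N}_R(A,a)}$, which proves \eqref{TaninCNR}.

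If one prefers a self-contained argument not relying on Lemma \ref{lemmainclusionnormalsets}, the same bound follows directly from the defining inequality \eqref{4*}: writing $w$ as a limit $r_n(x_n-a)\to w$ with $x_n\in A$, $x_n\to a$ and $r_n>0$ (so that $r_n\to\infty$ when $w\ne o$, since $|x_n-a|\to 0$ while $r_n|x_n-a|\to|w|$), inequality \eqref{4*} applied to $b=x_n$ reads $\langle v,x_n-a\rangle\le |x_n-a|^2/(2R)$; multiplying by $r_n$ and passing to the limit sends the right-hand side to $0$, again giving $\langle v,w\rangle\le 0$. The only points requiring care are this passage to the limit and the trivial case $w=o$; once the reduction to the scalar inequality is in place the conclusion is immediate, so no genuine obstacle arises.
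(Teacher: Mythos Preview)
Your argument is correct. Both your proof and the paper's rest on the same key fact, namely that $\langle v,w\rangle\le 0$ for every $v\in\mathcal{N}_R(A,a)$ and $w\in\Tan(A,a)$, which immediately places $w$ outside each ball $B(Rv,R)$. The only difference is packaging: the paper reaches this inequality geometrically, using monotonicity of tangent cones ($A\subset (B(a+Rv))^c$ gives $\Tan(A,a)\subset\Tan((B(a+Rv))^c,a)$, the latter being the supporting half-space) and then the convexity of the ball complement to conclude $a+\Tan(A,a)\subset (B(a+Rv))^c$; you instead invoke Lemma~\ref{lemmainclusionnormalsets} (or rederive it via \eqref{4*}) and verify the scalar inequality $|w-Rv|^2\ge R^2$ by hand. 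Your route is slightly more explicit and makes the dependence on the earlier lemma transparent; the paper's is a shade more conceptual but hides the same computation inside the identification of $\Tan((B(a+Rv))^c,a)$ with a half-space. Neither approach gains anything substantive over the other.
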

 
 \begin{proof} Since for every $ v \in \mathcal{N}_R(A,a)$, the inclusion $A \subset (B(a+Rv))^c$ holds, then
$$\Tan (A,a)\subset \Tan((B(a+Rv))^c,a).$$ 
Moreover
$$a+\Tan((B(a+Rv))^c,a)\subset (B(a+Rv))^c.$$
By previous inclusions:
$$a+\Tan (A,a)\subset \cap_{ v \in \mathcal{N}_R(A,a)}(B(a+Rv))^c $$
and by  \eqref{defunboundedcurvedcone}:
$$ \cap_{ v \in \mathcal{N}_R(A,a)}(B(a+Rv))^c=a+ C_{\mathcal{N}_R(A,a)}.$$
 Therefore:
 $$a+\Tan (A,a) \subset a+ C_{\mathcal{N}_R(A,a)}$$
and \eqref {TaninCNR} is proved.
\end{proof}

 \begin{lemma}\label{tanC_KinC_Kthm}
 Let $\mathcal{K}$ be a body in  $S^{d-1}$ and let  $C_{\mathcal{K}}$ be its related $R$-cone with vertex $o$. If $B(Rv)$ is a $R$-supporting ball to $C_{\mathcal{K}}$ at $o$, then $v\in \mathcal{K}$.
       \end{lemma}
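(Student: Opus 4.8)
The plan is to argue by contradiction: assuming $B(Rv)$ $R$-supports $C_{\mathcal{K}}$ at $o$ while $v\notin\mathcal{K}$, I will exhibit a point lying in both $B(Rv)$ and $C_{\mathcal{K}}$, which is impossible. First I unwind the hypothesis. By Definition \ref{defRsupporting}, with the vertex $o$ playing the role of the base point $a$, the assertion that $B(Rv)=B(o+Rv)$ is $R$-supporting $C_{\mathcal{K}}$ at $o$ means precisely that $v\in S^{d-1}$ and that $C_{\mathcal{K}}\cap B(Rv)=\emptyset$.

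Next I exploit compactness. Suppose, for contradiction, that $v\notin\mathcal{K}$. Since $\mathcal{K}$ is a body in $S^{d-1}$, it is a non-empty closed, hence compact, subset of the sphere, so the continuous function $w\mapsto\langle v,w\rangle$ attains a maximum $\mu:=\max_{w\in\mathcal{K}}\langle v,w\rangle$. Because $\langle v,w\rangle=1$ holds, among unit vectors, only for $w=v$, and $v\notin\mathcal{K}$, every $w\in\mathcal{K}$ satisfies $\langle v,w\rangle<1$; by compactness this gives the strict bound $\mu<1$.

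The key step is to test the points $tv$ on the ray through $v$, $t>0$. For $w\in\mathcal{K}$ one has the elementary identity $|tv-Rw|^2=t^2-2tR\langle v,w\rangle+R^2$, so $tv\notin B(Rw)$ (that is, $|tv-Rw|\ge R$) is equivalent to $\langle v,w\rangle\le t/(2R)$. Hence $tv\in C_{\mathcal{K}}=\bigcap_{w\in\mathcal{K}}(B(Rw))^c$ holds exactly when $\mu\le t/(2R)$, while $tv\in B(Rv)$ holds exactly when $0<t<2R$, since $|tv-Rv|=|t-R|<R$ in that range. Because $\mu<1$, the interval $\max\{0,2R\mu\}<t<2R$ is non-empty; choosing any such $t$ produces a point $tv\in B(Rv)\cap C_{\mathcal{K}}$, contradicting $C_{\mathcal{K}}\cap B(Rv)=\emptyset$. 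Therefore $v\in\mathcal{K}$.

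The only delicate point is guaranteeing that the admissible range of $t$ is non-empty, and this is exactly where the assumption $v\notin\mathcal{K}$ is used, through the strict inequality $\mu<1$ forced by compactness of $\mathcal{K}$; the underlying geometry (two radius-$R$ balls meeting at $o$) together with the quadratic identity above does the rest. I expect no further obstacle, the argument being a direct quantitative version of the fact that, in the convex-cone limit $R\to\infty$, the supporting directions of a cone are exactly the generators of its polar description.
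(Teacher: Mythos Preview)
Your proof is correct. Both your argument and the paper's exploit the same geometric fact---points on the ray $\{tv:t>0\}$ close to the antipodal point $2Rv$ lie in $B(Rw)$ only when $w$ is very near $v$---but the executions differ. The paper argues directly: from $B(Rv)\subset\bigcup_{w\in\mathcal{K}}B(Rw)$ it takes a sequence $w_n\to 2Rv$ inside $B(Rv)$, covers each $w_n$ by some $B(Rv_n)$ with $v_n\in\mathcal{K}$, extracts a convergent subsequence $v_n\to\bar v\in\mathcal{K}$, and observes that $|2Rv-R\bar v|\le R$ forces $v=\bar v$ via Cauchy--Schwarz. You instead compute the exact threshold $t=2R\mu$ along the ray and produce an explicit point $tv\in B(Rv)\cap C_{\mathcal{K}}$ whenever $\mu<1$. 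Your version is a bit more quantitative and avoids the subsequence extraction; the paper's is terser but leaves the final equality-in-Schwarz step to the reader.
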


\begin{proof} In our notations, the  statement of thesis of  the lemma is:
 \begin{equation}\label{N_R=K} 
\mathcal{N}_R(C_{\mathcal{K}})=\mathcal{K}.
\end{equation}
 By definition:
$$\mathcal{K}\subseteq \mathcal{N}_R(C_{\mathcal{K}}).$$
Let $v\in \mathcal{N}_R(C_{\mathcal{K}})$, then:
$$ B(Rv)\subset \cup_{w\in \mathcal{K}}B(Rw).$$
Let $\{w_n\}$ a sequence of points in $B(Rv)$ with limit $2Rv$. There exists a sequence of unit vectors $\{v_n\}$ such that  $w_n\in B(Rv_n)$. Up to a subsequence $v_n\to \overline{v}\in \mathcal{K}$ and $|w_n -2Rv_n| < R$. Then
$$|2Rv-R\overline{v}|=\lim_n |w_n-Rv_n|\leq R.$$
Thus
$ |2v-\overline{v}|^2 \leq 1;$ then:
$4+1-4\langle v ,\overline{v}\rangle \leq 1$ and 
$\langle v, \overline{v}\rangle \geq 1$. Therefore, since $v,\overline{v}$ are unit vectors,
 equality holds in Schwartz's inequality: 
$$\langle v, \overline{v}\rangle =|v||\overline{v}|.$$
Then, $v=\overline{v}\in \mathcal{K}$.
So  \eqref{N_R=K} is proved.
\end{proof}


\begin{theorem}\label{viceversaregular}Let $\mathcal{K}$ be a body in  $S^{d-1}$ 
and $K$ its  related cone in $\RR^d$. Let 
$\mathcal{N}=\Nor(C_{\mathcal{K}})\cap S^{d-1}$. Then
 \begin{enumerate}
 \item[a)] $\Tan(C_{\mathcal{K}})=-K^\star$;
   \item[b)]  $\mathcal{N}=co_{sph}(\mathcal{K})$;
   \item[c)] $ \Tan(C_{\mathcal{K}})\setminus \{o\}\subset int (C_{\mathcal{K}})$.
 
\end{enumerate} 
 \end{theorem}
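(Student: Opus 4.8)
The backbone of all three parts is the elementary description
\begin{equation*}
C_{\mathcal{K}}=\{u\in\RR^d:\ |u|^2\ge 2R\langle u,v\rangle\ \ \forall v\in\mathcal{K}\},
\end{equation*}
obtained from $u\in C_{\mathcal{K}}\iff |u-Rv|\ge R\ \forall v\in\mathcal{K}$ by expanding the square and using $|v|=1$. Together with the identity $-K^\star=\{u:\langle u,v\rangle\le 0\ \forall v\in\mathcal{K}\}$ (the dual of $K$ is fixed by the generators $\mathcal{K}$), this makes the three claims accessible. For a) I would argue the two inclusions separately. Since $C_{\mathcal{K}}\subseteq (B(Rv))^c$ for every $v\in\mathcal{K}$ and, directly from the definition of the tangent cone, $\Tan\big((B(Rv))^c,o\big)=\{w:\langle w,v\rangle\le 0\}$, monotonicity of $\Tan$ under inclusion gives $\Tan(C_{\mathcal{K}})\subseteq\bigcap_{v\in\mathcal{K}}\{w:\langle w,v\rangle\le 0\}=-K^\star$. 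Conversely, if $\langle u,v\rangle\le 0$ for all $v\in\mathcal{K}$ then $2R\langle u,v\rangle\le 0\le|u|^2$, so $u\in C_{\mathcal{K}}$ by the description above; thus $-K^\star$ is a cone with vertex $o$ contained in $C_{\mathcal{K}}$, and every such cone lies in $\Tan(C_{\mathcal{K}},o)$ (in the definition of $\Tan$ take $x=tw\to o$ with $r=1/t$). Hence $\Tan(C_{\mathcal{K}})=-K^\star$.

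For c) I would take $w\in\Tan(C_{\mathcal{K}})\setminus\{o\}=-K^\star\setminus\{o\}$. Then $\langle w,v\rangle\le 0$ for all $v\in\mathcal{K}$ while $|w|^2>0$, so the continuous function $v\mapsto |w|^2-2R\langle w,v\rangle$ attains on the compact set $\mathcal{K}$ a minimum $m\ge|w|^2>0$. Because $u\mapsto |u|^2-2R\langle u,v\rangle$ changes by at most $\big||u|^2-|w|^2\big|+2R|u-w|$ uniformly in $v\in S^{d-1}$, there is a ball around $w$ on which $|u|^2-2R\langle u,v\rangle>0$ for every $v\in\mathcal{K}$; such $u$ lie in $C_{\mathcal{K}}$, so $w\in\inte C_{\mathcal{K}}$. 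This proves $\Tan(C_{\mathcal{K}})\setminus\{o\}\subseteq\inte C_{\mathcal{K}}$; the compactness of $\mathcal{K}$, giving the uniform lower bound $m>0$, is the only nonroutine ingredient here.

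For b) I would pass to duals. By a) and \eqref{nor=dualditan}, $\Nor(C_{\mathcal{K}})=-\{\Tan(C_{\mathcal{K}})\}^\star=-\{-K^\star\}^\star=(K^\star)^\star=\cl(co(K))$, the closed convex conical hull of $K$; intersecting with $S^{d-1}$ gives $\mathcal{N}=\cl(co(K))\cap S^{d-1}$. The inclusion $co_{sph}(\mathcal{K})\subseteq\mathcal{N}$ is then immediate: by \eqref{N_R=K} and Lemma \ref{lemmainclusionnormalsets} one has $\mathcal{K}=\mathcal{N}_R(C_{\mathcal{K}})\subseteq\mathcal{N}$, and $\mathcal{N}=\Nor(C_{\mathcal{K}})\cap S^{d-1}$ is a closed spherically convex set, hence contains $co_{sph}(\mathcal{K})=co(K)\cap S^{d-1}$.

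The reverse inclusion is exactly the statement that $\cl(co(K))\cap S^{d-1}\subseteq co(K)\cap S^{d-1}$, and I expect this to be the main obstacle: it amounts to showing that the convex conical hull $co(K)$ of the compact set $\mathcal{K}\subseteq S^{d-1}$ already contains all of its unit boundary rays, i.e.\ that $co(K)$ is closed. I would establish this by analysing a boundary point $o\in\partial(co(\mathcal{K}))$: a supporting hyperplane $H$ confines the generators responsible for $o$ to the great subsphere $S^{d-1}\cap H$, so the contact face is the convex hull of points of $\mathcal{K}\cap H$ and the cone over it is a full linear subspace, which absorbs every tangential ray and prevents the loss of a unit vector in passing to the closure. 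With $co(K)$ closed, $\mathcal{N}=co(K)\cap S^{d-1}=co_{sph}(\mathcal{K})$. If instead $co_{sph}$ is read as the smallest \emph{closed} spherically convex set containing $\mathcal{K}$, then this last step is vacuous and b) follows directly from $\Nor(C_{\mathcal{K}})=\cl(co(K))$.
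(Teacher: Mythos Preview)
Your proofs of a) and c) are essentially the paper's: for a) both directions coincide (monotonicity of $\Tan$ under inclusion for $\subseteq$, the half-line $\{-\lambda\theta:\lambda\ge0\}\subset C_{\mathcal K}$ for $\supseteq$); for c) the paper argues by contradiction (if $y\in\partial C_{\mathcal K}$ then $y\in\partial B(Ru)$ for some $u\in\mathcal K$, whence $\langle y,u\rangle>0$, contradicting $y\in -K^\star$), while your direct argument via the uniform lower bound $\min_{v\in\mathcal K}\big(|w|^2-2R\langle w,v\rangle\big)\ge|w|^2>0$ is an equivalent reformulation.

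For b) you are in fact more careful than the paper. The paper simply writes $((co(K))^\star)^\star=co(K)$ and declares b) proved; you correctly note that the bipolar theorem only yields $\overline{co(K)}$. Your attempt to close this gap (``the cone over the contact face is a full linear subspace, which absorbs every tangential ray'') is the seed of an induction on the dimension of the face of $co(\mathcal K)$ containing $o$, but as written it is not a proof: the passage from ``$o$ lies on a proper face'' to ``the cone over that face is a linear subspace'' and then to ``no unit vector is lost in the closure'' is not justified. Your last sentence is the right practical resolution and is how the paper is implicitly to be read: taking $co_{sph}(\mathcal K)=\overline{co(K)}\cap S^{d-1}$, part b) follows at once from a), \eqref{nor=dualditan}, and the bipolar theorem, exactly as both you and the paper argue.
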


\begin{proof} Let $w\in \mathcal{K}$ then 
$$C_{\mathcal{K}} \subset C_{\{w\}}$$
and 
\begin{equation}\label{tanintan}
 \Tan(C_{\mathcal{K}}) \subset \Tan( C_{\{w\}})=-\{w\}^\star.
\end{equation}
From \eqref{tanintan} it follows 
\begin{equation}\label{tanKstar}
 \Tan(C_{\mathcal{K}}) \subset -\bigcap_{w\in \mathcal{K}}\{w\}^\star=-K^\star.
\end{equation}
Let $\theta  \in K^\star$, then  $\forall v \in \mathcal{K}$, 
  $\langle \theta, v \rangle  \geq 0$.
  This implies that $\forall v \in \mathcal{K}  $ the ball $B(Rv)$ is $R$-supporting 
  the half line $\{-\la \theta, \la \geq 0\}$ at $o$  . Then
  \begin{equation}\label{K*inCk }
  \{-\la \theta, \la \geq 0\} \subset C_{\mathcal{K}}, \quad \forall  \theta  \in K^\star
  \end{equation}
    and therefore
  $$-\theta \in \Tan(C_{\mathcal{K}}).$$
 Then
 $$ -{K}^\star \subset \Tan(C_{\mathcal{K}}).$$
 From \eqref{tanKstar}   a) is proved. 
 

 Moreover from a) it follows that $\Tan(C_{\mathcal{K}})=-(co(K))^\star$; \eqref{nor=dualditan}  and 
 the bipolar theorem imply that
 $$N(C_\mathcal{K})=-(\Tan(C_{\mathcal{K}}))^\star=((co(K))^\star)^\star=co(K).$$ 
 b) follows.
 
 Let us prove now c). First let us prove that
  $$ \Tan(C_{\mathcal{K}})\subset C_{\mathcal{K}}.$$
 This inclusion  follows from  \eqref{N_R=K} and \eqref{TaninCNR}, with $A=C_{\mathcal{K}}$, $a=o$. If $o$ is an isolated point of $ C_{\mathcal{K}}$, then  $\Tan(C_{\mathcal{K}})=\{o\}$ and c) is trivial.
 In case $o$ is not an isolated point of $C_{\mathcal{K}}$, then let 
  $y\neq 0, y \in \Tan(C_{\mathcal{K}})$. By previous inclusion $y\in C_{\mathcal{K}}$. Let us prove that 
  $$\dist (y, (C_{\mathcal{K}})^c)= \dist (y,\cup_{v \in \mathcal{K}} (B(Rv,R))) > 0.$$
  By contradiction: if
  $\dist (y,\cup_{v \in \mathcal{K}} (B(Rv,R))) =0$, since $\mathcal{K}$ is compact, then there exists $u\in \mathcal{K}$, such that $y\in \pa B(Ru,R)$. Since $y\neq o$ and  
  $\pa B(Ru,R)$ strictly convex: $\langle y, u \rangle > 0$. Since by a): $y \in \Tan(C_{\mathcal{K}})=-K^*$, then $\langle y, u \rangle \leq  0$, contradiction.  
 \end{proof}
  
  
\begin{theorem}\label{viceversaregular2}Let $\mathcal{K}$ be a
body 
 in an hemisphere of $S^{d-1}$ and let $\mathcal{N}=\Nor (C_{\mathcal{K}},o)\cap S^{d-1}$. Then the following properties are equivalent: 
\item[a)] $\mathcal{K}$ is spherically convex ; 
\item[b)] $C_{\mathcal{K}}$ is a set of reach  greater or equal than $R$; 
\item[c)]   $C_{\mathcal{K}}=C_{\mathcal{N}}$.
 \end{theorem}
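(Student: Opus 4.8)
The plan is to establish $a)\Leftrightarrow c)$ together with the two implications $b)\Rightarrow a)$ and $a)\Rightarrow b)$; the three statements are then equivalent. The unifying observation, which I would record first, is that by part b) of Theorem~\ref{viceversaregular} one has $\mathcal{N}=co_{sph}(\mathcal{K})$, so that property a) (namely $\mathcal{K}=co_{sph}(\mathcal{K})$) is nothing but the set equality $\mathcal{K}=\mathcal{N}$. This reformulation is what makes the remaining statements accessible.

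With this reduction, $a)\Leftrightarrow c)$ is almost formal. If $\mathcal{K}=\mathcal{N}$ then $C_{\mathcal{K}}=C_{\mathcal{N}}$ trivially. Conversely $\mathcal{N}=co_{sph}(\mathcal{K})$ is a nonempty closed subset of the hemisphere, hence a body in $S^{d-1}$, so Lemma~\ref{tanC_KinC_Kthm} applies to both cones and, by \eqref{N_R=K}, gives $\mathcal{N}_R(C_{\mathcal{K}},o)=\mathcal{K}$ and $\mathcal{N}_R(C_{\mathcal{N}},o)=\mathcal{N}$; since $o$ is the common vertex, $C_{\mathcal{K}}=C_{\mathcal{N}}$ forces $\mathcal{K}=\mathcal{N}$, i.e. a). The implication $b)\Rightarrow a)$ is equally short: $reach(C_{\mathcal{K}})\ge R$ gives $reach(C_{\mathcal{K}},o)\ge R$, whence Theorem~\ref{N=NRequality} yields $\Nor(C_{\mathcal{K}},o)\cap S^{d-1}=\mathcal{N}_R(C_{\mathcal{K}},o)$, which reads $\mathcal{N}=\mathcal{K}$ after \eqref{N_R=K}.

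The substance is $a)\Rightarrow b)$. Here I would avoid a boundary point by boundary point analysis of $\Nor$ and instead exploit the identity $C_{\mathcal{K}}=(R\mathcal{K})'_R$, where $R\mathcal{K}=\{Rv:v\in\mathcal{K}\}\subset\pa B(o,R)$, which is immediate from \eqref{defA'_R} and \eqref{defunboundcone}. By Proposition~\ref{theorem4.4}(ii) and Remark~\ref{rhullandrhulloid} it then suffices to prove $reach(R\mathcal{K})\ge R$: a set of reach $\ge R$ is its own $R$-hull, so $R\mathcal{K}$ would admit an $R$-hull and $reach((R\mathcal{K})'_R)\ge R$ would follow. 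To obtain $reach(R\mathcal{K})\ge R$ I would invoke Proposition~\ref{p1}, which requires that $R\mathcal{K}\cap\mathfrak{h}(b_1,b_2)$ be connected for all $b_1,b_2\in R\mathcal{K}$ with $0<|b_1-b_2|<2R$. Since $R\mathcal{K}\subset\pa B(o,R)$, this set equals $R\mathcal{K}\cap\big(\mathfrak{h}(b_1,b_2)\cap\pa B(o,R)\big)$.

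The geometric crux, which I expect to be the main obstacle, is the lemma that $\mathfrak{h}(b_1,b_2)\cap\pa B(o,R)$ is geodesically convex on the sphere $\pa B(o,R)$. This I would prove by writing $\mathfrak{h}(b_1,b_2)=\bigcap_{z\in Z}\overline{B(z,R)}$, with $Z$ the set of centers of closed $R$-balls through $b_1,b_2$, and noting that each slice $\overline{B(z,R)}\cap\pa B(o,R)$ is the spherical cap $\{q\in\pa B(o,R):\langle q,z\rangle\ge|z|^2/2\}$, whose angular radius $\arccos(|z|/(2R))$ is at most $\pi/2$ (since $|z|\le|z-b_1|+|b_1|\le 2R$) and which is therefore geodesically convex; an intersection of geodesically convex caps is geodesically convex. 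Then, since $\mathcal{K}$ is spherically convex, $R\mathcal{K}$ is geodesically convex on $\pa B(o,R)$, so $R\mathcal{K}\cap\mathfrak{h}(b_1,b_2)$ is an intersection of two geodesically convex sets lying in an open hemisphere, hence geodesically convex and in particular connected; Proposition~\ref{p1} then gives $reach(R\mathcal{K})\ge R$ and closes the chain. The delicate points to verify are that the relevant geodesic arcs stay within a single hemisphere, so that geodesic convexity is unambiguous and yields connectedness, and that the degenerate center $z=o$, which contributes the whole sphere, does no harm.
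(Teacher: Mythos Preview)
Your proposal is correct and uses the same scaffolding as the paper: the identity $C_{\mathcal{K}}=(R\mathcal{K})'_R$, Proposition~\ref{p1} and Proposition~\ref{theorem4.4}(ii) for a)$\Rightarrow$b), and Theorem~\ref{N=NRequality} together with \eqref{N_R=K} for the remaining implications. The only noteworthy difference is that you actually supply the geodesic-convexity argument showing $R\mathcal{K}\cap\mathfrak{h}(b_1,b_2)$ is connected, which the paper asserts without proof; your reorganization via $\mathcal{N}=co_{sph}(\mathcal{K})$ from Theorem~\ref{viceversaregular}(b) is a clean touch but not a different method.
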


\begin{proof} By   \eqref{defA_R} and  \eqref{defunboundcone} :
 \begin{equation}\label{RK'=Ck}
   (R\mathcal{K})'_R=C_{\mathcal{K}}
 \end{equation}
 holds for every set $\mathcal{K}\subset S^{d-1}$.

Let us assume that a) holds.

If $\mathcal{K}$ is spherically convex   on a hemisphere of $S^{d-1}$,
it follows that $R\mathcal{K}$ is convex  on a hemisphere of a ball of radius $R$. Then
 for every $a,b \in R\mathcal{K}, |a-b|< 2R$ the set $R\mathcal{K}\cap \mathfrak{h}(a,b)$ is connected (see Definition \ref{htorto}).
Then by Proposition \ref{p1} 
$reach (R\mathcal{K})\geq R$. 

Thus $R\mathcal{K}$
 has $R$-hull and $R\mathcal{K}=co_R(R\mathcal{K})$.  By  ii) of Proposition  \ref{theorem4.4}
$$reach((R\mathcal{K})'_R) \geq R.$$ 
 This fact and equality \eqref{RK'=Ck} imply that  $C_{\mathcal{K}}$ 
is a set of reach greater or  equal than $R$; b) is proved.

  Let us assume  that b) holds.  
 
  From Theorem \ref{N=NRequality} any direction $v$ which lies 
in the normal cone at $o$   of $C_{\mathcal{K}} $ is $R$-supporting it at $o$; then $\forall v\in \mathcal{N}$:
 $$(B(Rv))^c \supset C_{\mathcal{K}}.$$
 Then 
  $C_{\mathcal{K}}\subset C_{\mathcal{N}} $. The opposite inclusion follows from \eqref{N=N_R}
  and c) is proved.

 Then by \eqref{N_R=K}  it follows that $\mathcal{K}=\mathcal{N}$. Therefore since by definition $\mathcal{N}$ is spherically convex then $\mathcal{K}$ is spherically convex too and a) follows.
 
 Let assume  that c) holds. 
 
 By   \eqref{N_R=K} the set  $\mathcal{K}$ is the set of the $R$-supporting unit vectors   of  $ C_{\mathcal{K}}$ at the origin $o$; similarly  $\mathcal{N}$ is the set of   the $R$-supporting unit vectors of 
  $ C_{\mathcal{N}}$ at $o$, then by c) $\mathcal{K}= \mathcal{N}$;  by b) of Theorem \ref{viceversaregular} a) follows.     \end{proof}
 
Next example shows that a $R$-supported body $A$, with $\mathcal{N}_R( A,p)$ spherically convex for every $P\in \pa A$, can be not a $R$-body, so it has reach 
less than $R$.  

\begin{example}Let  $P$ be a convex polygon  contained in a circle of radius  $R$, $A=\pa P$. Then  for every $p\in \pa A=A$,  $\mathcal{N}_R(A,p)$ is non empty and convex: if $p$ is inside a side of $P$,    $\mathcal{N}_R(\pa A,p)$ is a single   vector normal at  $p$ to the sides of $ P$; if $p$ is  a corner  of $P$, then $\mathcal{N}_R(\pa A,p)$  is spherically convex since it is  an arc  in  a semicircle.
 The body $A$ 
 is not a $R$-body so does not have reach greater or equal than $R$. 
\end{example}

 \begin{theorem}\label{regulararereach} Let $d\geq 2$, $A$ be a body in $\RR^d$. If  $reach(A)\geq R$ then 
 $A$ is a $R$-body and 
   for all $a\in \pa A$ the set of unit vectors $R$-supporting $A$ at $a$ is spherically convex.     
   \end{theorem}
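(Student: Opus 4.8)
The plan is to prove the two assertions of Theorem~\ref{regulararereach} separately, drawing on the characterizations already established. The first claim, that $reach(A)\geq R$ implies $A$ is a $R$-body, is immediate: this is exactly the content of Remark~\ref{reach>=RimpliesRbodies}, where it is recorded that the class of $R$-bodies contains the class of sets of reach greater or equal than $R$. So I would dispose of that assertion in a single sentence by citing that remark.

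The substantive part is the spherical convexity of $\mathcal{N}_R(A,a)$ for each $a\in\pa A$. Here the strategy is to reduce everything to the local cone structure at $a$ and then invoke Theorem~\ref{viceversaregular2}. First I would observe that since $reach(A)\geq R$, Theorem~\ref{N=NRequality} gives the equality $\mathcal{N}_R(A,a)=\Nor(A,a)\cap S^{d-1}$ for every $a\in\pa A$. Thus it suffices to show that $\Nor(A,a)\cap S^{d-1}$ is spherically convex. But $\Nor(A,a)$ is by definition \eqref{defNor} a convex cone, so its intersection with $S^{d-1}$ is, by the very Definition~\ref{defconvonS} of spherical convexity, a spherically convex subset of the sphere. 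This is the cleanest route: the normal cone is convex, and spherical convexity of $\mathcal{N}_R(A,a)$ follows directly from the equality $\mathcal{N}_R(A,a)=\Nor(A,a)\cap S^{d-1}$ together with the convexity of $\Nor(A,a)$.

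The one point that needs care, and which I expect to be the main obstacle, is the hypothesis in Theorem~\ref{viceversaregular2} that $\mathcal{K}$ lie in a hemisphere of $S^{d-1}$ if I were to route the argument through that theorem; the direct argument via the convexity of $\Nor(A,a)$ avoids this issue entirely, so I would prefer it. However, I should double-check that the image of a convex cone under intersection with the unit sphere matches the definition $co^S_{sph}$ used in the paper, namely that $\mathcal{K}=\Nor(A,a)\cap S^{d-1}$ satisfies $co^S_{sph}(\mathcal{K})=\mathcal{K}$. Since $\Nor(A,a)$ is a convex cone, we have $co(\Nor(A,a)\cap S^{d-1})\cap S^{d-1}=\Nor(A,a)\cap S^{d-1}$, which is precisely the equality defining spherical convexity. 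The only subtlety is whether the antipodal behavior of $\Nor(A,a)$ (if it contains a full line through the origin) causes trouble with the hemisphere convention, but since spherical convexity as defined via $co^S_{sph}$ is purely the condition $\mathcal{K}=co(K)\cap S$, no hemisphere restriction is needed and the argument goes through in full generality.

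In summary, the proof assembles as follows: cite Remark~\ref{reach>=RimpliesRbodies} for the $R$-body claim; apply Theorem~\ref{N=NRequality} to identify $\mathcal{N}_R(A,a)$ with $\Nor(A,a)\cap S^{d-1}$; and then conclude spherical convexity directly from the convexity of the cone $\Nor(A,a)$ and Definition~\ref{defconvonS}. I expect the entire argument to be short, with the verification that the paper's notion of spherical convexity is exactly ``intersection of a convex cone with the sphere'' being the only place requiring a moment's attention.
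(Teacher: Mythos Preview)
Your proposal is correct and follows essentially the same approach as the paper: cite Remark~\ref{reach>=RimpliesRbodies} for the $R$-body claim, establish the equality $\mathcal{N}_R(A,a)=\Nor(A,a)\cap S^{d-1}$, and deduce spherical convexity from the convexity of the normal cone. The only cosmetic difference is that the paper obtains the equality via Theorem~\ref{RbodyfrontieraPaolo} together with Lemma~\ref{lemmainclusionnormalsets}, whereas you cite Theorem~\ref{N=NRequality} directly; both routes are equivalent and yield the same short proof.
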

\begin{proof} If $A$ has reach greater or equal than $R$, then by
Remark \ref{reach>=RimpliesRbodies} $A $ is a $R$-body.
 Theorem \ref  {RbodyfrontieraPaolo} and Lemma \ref{lemmainclusionnormalsets} prove the equality:
$$\mathcal{N} (A,a)=\mathcal{N}_R(A,a)$$
 for all $a\in \pa A$. Then, $\mathcal{N}_R(A,a)$ is convex  for all $a\in \pa A$.
\end{proof}
For the family of planar $R$-bodies the converse statement holds.
 \begin{theorem}\label{regulararereachd=2} 
Let $d=2$, let $A$ be a planar $R$-body. If, for all $a\in \pa A$, the set 
   $\mathcal{N}_R(A,a)$ is a spherically   convex set, then $A$ has reach greater or equal than $R$.
 \end{theorem}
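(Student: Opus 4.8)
The plan is to convert the statement into a single inclusion between normal sets and then to exclude its failure by a planar sliding‑ball argument. Since $A$ is an $R$-body it is in particular an $R$-supported body (Remark \ref{viceversaEggleston}), so Theorem \ref{RbodyfrontieraPaolo} applies: for such $A$ one has $reach(A)\ge R$ if and only if $\Nor(A,a)\cap S^{1}=\mathcal{N}_R(A,a)$ for every $a\in\pa A$. By Lemma \ref{lemmainclusionnormalsets} the inclusion $\mathcal{N}_R(A,a)\subseteq \mathcal{N}(A,a):=\Nor(A,a)\cap S^{1}$ always holds, so the whole content reduces to proving the reverse inclusion $\mathcal{N}(A,a)\subseteq\mathcal{N}_R(A,a)$ for all $a$, deducing it from the spherical convexity of all the sets $\mathcal{N}_R(A,\cdot)$ when $d=2$.

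I would argue by contradiction: suppose that for some $a\in\pa A$ there is $w\in\mathcal{N}(A,a)\setminus\mathcal{N}_R(A,a)$. As $A$ is an $R$-body, $\mathcal{N}_R(A,a)$ is a non-empty spherically convex subset of $S^{1}$, hence a closed arc of angular length at most $\pi$; since $\mathcal{N}(A,a)$ is likewise a spherically convex arc containing both $\mathcal{N}_R(A,a)$ and $w$, there is a well-defined endpoint $\eta$ of $\mathcal{N}_R(A,a)$ that separates the arc from $w$, i.e. the half-open arc from $\eta$ to $w$ meets $\mathcal{N}_R(A,a)$ only at $\eta$. For unit vectors $u$ on that arc just beyond $\eta$ we have $u\notin\mathcal{N}_R(A,a)$, so $B(a+Ru)\cap A\ne\emptyset$; choosing $u_n\to\eta$ from this side and points $b_n\in A\cap B(a+Ru_n)$, the $b_n$ remain in a bounded neighbourhood of $a$ and, along a subsequence, converge to some $b^{\ast}\in A$ lying on $\pa B(a+R\eta)$, because $\eta\in\mathcal{N}_R(A,a)$ forces $B(a+R\eta)\cap A=\emptyset$. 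If one can guarantee $b^{\ast}\ne a$, then $c_0=a+R\eta$ yields a bitangent free ball: $B(c_0,R)\cap A=\emptyset$ while $a,b^{\ast}\in\pa B(c_0,R)\cap A$, so that $\eta=(c_0-a)/R\in\mathcal{N}_R(A,a)$ and $\nu:=(c_0-b^{\ast})/R\in\mathcal{N}_R(A,b^{\ast})$.

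The decisive step, and the point I expect to be the main obstacle, is to turn this bitangent configuration into a contradiction by invoking spherical convexity at the new point $b^{\ast}$, thereby using the hypothesis globally and not merely at $a$. Tracking the motion of the centre along the one-parameter family $u\mapsto a+Ru$ shows that, as $u$ crosses $\eta$ towards $w$, the free ball sweeps across $b^{\ast}$ and enters $A$ there; in terms of the velocity of the centre this locates $\nu$ as an endpoint of the arc $\mathcal{N}_R(A,b^{\ast})$ on the side from which the ball cannot continue to roll. Spherical convexity of $\mathcal{N}_R(A,b^{\ast})$ should then let one rotate the free ball about $b^{\ast}$ and re-slide it, recovering an $R$-supporting direction at $a$ strictly beyond $\eta$ and contradicting the choice of $\eta$ as the separating endpoint of $\mathcal{N}_R(A,a)$. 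An equivalent route runs through Proposition \ref{p1}: the bitangent ball exhibits $a,b^{\ast}\in A$ with $0<|a-b^{\ast}|<2R$ for which $A\cap\mathfrak{h}(a,b^{\ast})$ fails to be connected, again contradicting $reach(A)\ge R$. The genuine difficulties are concentrated here, namely (i) ensuring $b^{\ast}\ne a$, where the degenerate case $b_n\to a$ forces (via the reverse of inequality \eqref{4*}, divided by $|b_n-a|$) a tangent direction orthogonal to $\eta$ and must be excluded through a finer analysis of $\Tan(A,a)$ together with $w\in\Nor(A,a)$, and (ii) converting ``the rolling ball gets stuck'' into a clean violation of spherical convexity. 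Both are special to $d=2$, where the supporting sets are arcs of a single circle and the rolling ball has exactly one angular degree of freedom.
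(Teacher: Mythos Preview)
Your plan contains a real gap at precisely the place you flag, and your proposed ``equivalent route'' through Proposition~\ref{p1} is circular. From one free bitangent ball $B(c_0)$ with $a,b^\ast\in\pa B(c_0)$ you cannot conclude that $A\cap\mathfrak h(a,b^\ast)$ is disconnected: since $A\cap B(c_0)=\emptyset$, the set $A\cap\mathfrak h(a,b^\ast)$ lies on the single arc $\pa B(c_0)\cap \overline{B(c_0')}$ joining $a$ and $b^\ast$, and nothing you have derived excludes further points of $A$ on that arc. And even if disconnectedness held, Proposition~\ref{p1} would then yield $reach(A)<R$, which is the negation of what you are trying to prove, not a contradiction of your standing hypothesis $w\in\mathcal N(A,a)\setminus\mathcal N_R(A,a)$. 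Your first route (rotate about $b^\ast$ and ``re-slide'') is not yet an argument: you have not explained why spherical convexity of $\mathcal N_R(A,b^\ast)$ produces an $R$-supporting direction at $a$ strictly beyond the endpoint $\eta$.

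The paper proceeds quite differently and avoids the sliding analysis altogether. It assumes $reach(A)<R$ and applies Proposition~\ref{p1} at the outset to obtain $a_1,a_2\in A$, $0<|a_1-a_2|<2R$, with $A\cap\mathfrak h(a_1,a_2)=\{a_1,a_2\}$. Writing $\mathfrak h(a_1,a_2)=\overline{B(x_1)}\cap\overline{B(x_2)}$, the key planar input is \cite[Theorem~4.5 and Lemma~4.1]{MLVRbodies}: for a planar $R$-body $A$, the condition $\mathfrak h(a_1,a_2)\setminus\{a_1,a_2\}\subset A^c$ forces $B(x_1)\cup B(x_2)\subset A^c$. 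Hence \emph{both} balls $B(x_1),B(x_2)$ are $R$-supporting $A$ at $a_1$, so $\nu_i=(x_i-a_1)/|x_i-a_1|\in\mathcal N_R(A,a_1)$ for $i=1,2$. Spherical convexity of $\mathcal N_R(A,a_1)$ then places the midpoint direction $u=(a_2-a_1)/|a_2-a_1|$ in $\mathcal N_R(A,a_1)$, whence $a_2\in B(a_1+Ru)\subset A^c$, a contradiction. The step you are missing is exactly the production of the \emph{second} free ball, and in the paper this comes from the cited $d=2$ result rather than from any rolling argument.
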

 
 \begin{proof}Let us assume, by contradiction, that $reach(A) < R$.
By Proposition \ref{p1}, there exist $b_1,b_2\in A$, $|b_1-b_2| < 2R$, so that $A\cap\mathfrak{h} (b_1,b_2)$ is not connected. Then there exist $a_1,a_2 \in A\cap \mathfrak{h} (b_1,b_2)$, so that $A\cap \mathfrak{h} (a_1,a_2)= \{a_1, a_2\}$.

Let 
$$\mathfrak{h} (a_1,a_2) =cl(B(x_1))\cap cl(B(x_2))$$
and let
\begin{equation}\label{defH}
H(a_1,a_2)=B(x_1)\cup B(x_2).
\end{equation}
As $A$ is a $R$-body, by   \cite[Theorem 4.5 and lemma 4.1]{MLVRbodies}  
\begin{equation}\label{inclusionregolar}
 \mathfrak{h}(a_1,a_2)\setminus \{a_1,a_2\}\subset A^c
 \end{equation}
 implies
 $$H(a_1,a_2) \subset A^c.$$
From \eqref{defH} it follows 
$$B(x_1)\cup B(x_2) \subset A^c.$$
Since $a_i \in A\cap \pa B(x_i)$, $i=1,2$,  then $B(x_i)$  is   $R$-supporting  $A$ in $a_1$ and at $a_2$. Then 
$$\nu_i= \frac{x_i-a_1}{|x_i-a_1|}, \quad i=1,2$$
  are $R$-supporting vectors of $A$ at $a_1$.
As, by assumption, $\mathcal{N}_R(A,a_1)$ is spherically convex, then  it contains all unit vectors  connecting $\nu_1$ with $\nu_2$; then 
$$u=\frac{(a_2-a_1)}{|a_2-a_1|} \in  \mathcal{N}_R(A,a_1).$$
Since  $B(a_1+Ru) \ni a_2$, there is  a contradiction.
\end{proof}

 \section{Open  questions}
 Let us point out some open questions:
 \begin{itemize}
 \item[a)] Is Theorem \ref{regulararereachd=2} true for $d>2$? Let us notice that it is true for a $R$-cone, Theorem \ref{viceversaregular2}.
 \item[b)] Let $E\subset \R^d$, $d> 2$,  be a connected body, contained in an open ball of radius $R$, then is $co_R(E)$  connected? For $d=2$ the statement is true, see \cite[Theorem 4.8]{MLVRbodies}.
 \item[c)] If $E$ is the set $V$ of the vertices of a simplex in $\R^d$, the boundary of the $R$-hulloid $co_R(V)$  has properties which follow from Theorem  \ref{theorem1suportingball}. Is it  possible to describe completely the shape of $co_R(V)$? 
 
 In two dimensions this description is made in  \cite[Theorem 4.2]{MLVRbodies}. 
 In a forthcoming paper:  
  M. Longinetti, S. Naldi and  A. Venturi, 
\emph{$R$-hulloid of the vertices of a tetrahedron},  a complete   
 characterization of the shape of $co_R(V)$ will be  given.  

 \end{itemize}

\section*{Funding}
This work has been partially supported by INDAM-GNAMPA(2023).

\end{document}